\newcommand\numberthis{\addtocounter{equation}{1}\tag{\theequation}}
\newcommand{\norm}[1]{\left\|#1\right\|} 
\newcommand{\defmathbb}[1]{%
    \expandafter\gdef\csname#1\endcsname{\mathbb{#1}}%
}
\newcommand{\defmathcal}[1]{%
    \expandafter\gdef\csname#1#1\endcsname{\mathcal{#1}}%
}
\newtheorem{thm}{Theorem}
\newtheorem{lemma}[thm]{Lemma}
\newtheorem{prop}[thm]{Proposition}
\newtheorem{coro}[thm]{Corollary}
\newtheorem{introthm}{Theorem}
\theoremstyle{definition}
\newtheorem{defin}[thm]{Definition}
\newtheorem{remark}[thm]{Remark}
\newtheorem{notation}[thm]{Notation}
\theoremstyle{remark}
\numberwithin{equation}{section}
\title{Graph C*-algebras are singly generated}
\newcommand{\Esink}{E^0_{\mathrm{sink}}}
\newcommand{\Eboundary}{E^0_{\mathrm{bdry}}}
\newcommand{\Eint}{E^0_{\mathrm{int}}}
\newcommand{\edgesink}{E^1_{\mathrm{sink}}}
\newcommand{\edgebdy}{E^1_{\mathrm{bdry}}}
\newcommand{\edgeint}{E^1_{\mathrm{int}}}
\newcommand{\Cstar}{\ensuremath{C^*}}
\newcommand{\twoone}{\ensuremath{\mathrm{II}_{1}\,}}
\newcommand{\bT}{\ensuremath{\mathbb{T}}}
\newcommand{\bF}{\ensuremath{\mathbb{F}}}
\newcommand{\cZ}{\ensuremath{\mathcal{Z}}}
\author[Curda]{Jakub Curda}
\address{Jakub Curda\\ Mathematical Institute\\ University of Oxford\\ Oxford, OX2 6GG\\
United Kingdom}
\email{jakub.curda@maths.ox.ac.uk}
\author[Gonzales]{Julian Gonzales}
\address{Julian Gonzales\\ School of Mathematics and Statistics  \\
University of Glasgow\\ University Place\\ Glasgow, G12 8QQ\\ United Kingdom}
\email{julian.gonzales@glasgow.ac.uk}
\author[Wu]{Victor Wu}
\address{Victor Wu\\ School of Mathematics and Statistics \\ University of Sydney \\ Camperdown, NSW 2050 \\ Australia}
\email{victor.wu1@sydney.edu.au}
\begin{document}

\maketitle

\begin{abstract}
    We show that the $C^*$-algebra of a countable directed graph is singly generated. As a consequence, any $C^*$-algebra generated by a countable family of projections and partial isometries satisfying Cuntz-Krieger relations is singly generated.
\end{abstract}

\section*{Introduction}

The question of which operator algebras are generated by a single element, or equivalently two self-adjoint elements, goes at least back to Kadison's list of problems from 1967 (see \cite{Ge03}). He asked the question for separably acting \twoone factors, or more generally, separably acting von Neumann algebras. Kadison's problem has been solved for all factors not of type \twoone (\cite{Pearcy62,Wogen69}), and many classes of \twoone factors are also known to be singly generated (see for example \cite{Popa85,GePopa98,Sri}). The problem remains open in its full generality - for example, it is open for the free group factors $L(\bF_n)$ when $n\geq3$.

In contrast to the von Neumann algebraic setting, it is too naive to ask if all separable \Cstar-algebras are singly generated - one does not need to look far to find counterexamples. For unital commutative \Cstar-algebras, it is known that $C(X)$ is singly generated if and only if $X$ embeds into the complex plane. Noncommutative counterexamples are also abundant. Algebras of the form $C(X)\otimes M_n$ are not singly generated provided the matrix dimension $n$ is sufficiently small compared to the dimension of $X$ (see \cite{Nagisa04} for instance). Since the quotient of a singly generated \Cstar-algebra remains singly generated,  more counterexamples can be constructed from extensions of these. For example, the full group \Cstar-algebra $\Cstar(\bF_2)$ of $\bF_2$ is not singly generated since it quotients onto $C(\bT^2)$.

It is therefore unsurprising that the generator problem for \Cstar-algebras is often restricted to algebras that lack finite-dimensional representations (see \cite[Question~1.3]{TW14} for instance), or sometimes even to simple algebras. To the best of our knowledge, the only known obstruction to single generation is the presence of homogeneous quotients that are not singly generated.

A natural class of \Cstar-algebras that do not admit problematic quotients is the class of graph \Cstar-algebras; see the decomposition described below. In this paper we show that for any countable directed graph $E$, its graph \Cstar-algebra $\Cstar(E)$ is singly generated. 

\begin{introthm}
\label{thm}
Let $E$ be a countable directed graph. 
Then $\Cstar(E)$ is singly generated.
\end{introthm}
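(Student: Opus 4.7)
The strategy is to construct a single element $T \in C^*(E)$ and recover all canonical generators by continuous functional calculus and algebraic operations. The first reduction uses the Cuntz-Krieger relations: $C^*(E)$ is generated by the countable family $\{s_e : e \in E^1\} \cup \{p_v : v \in \Esink \cup \Eboundary\}$, since at every regular vertex $v$ (one which is neither a sink nor an infinite emitter) the projection $p_v = \sum_{s(e)=v} s_e s_e^*$ is a finite sum of source projections. Enumerate the edges as $(e_n)_{n \geq 1}$ and the sinks and infinite emitters as $(v_m)_{m \geq 1}$.

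The natural candidate is a weighted sum
\[
T = \sum_n \lambda_n s_{e_n} + \sum_m \mu_m p_{v_m},
\]
with $\sum_n |\lambda_n| + \sum_m |\mu_m| < \infty$ for norm convergence and with the nonzero moduli $|\lambda_n|^2$ and $\mu_m^2$ all distinct so that their contributions can be separated spectrally. Using the orthogonalities $s_{e_n}^* s_{e_{n'}} = \delta_{n,n'} p_{r(e_n)}$, $p_{v_m} p_{v_{m'}} = \delta_{m,m'} p_{v_m}$, and $s_{e_n}^* p_{v_m} = 0$ when $v_m$ is a sink, $T^*T$ reduces to $\sum_n |\lambda_n|^2 p_{r(e_n)} + \sum_m \mu_m^2 p_{v_m}$, modulo cross terms arising only when $v_m$ is an infinite emitter that is the source of some $e_n$. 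Continuous functional calculus on $T^*T$ then extracts each individual range projection $p_{r(e_n)}$ and each vertex projection $p_{v_m}$.

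The main obstacle is extracting the individual source projections $s_e s_e^*$ (and hence the partial isometries $s_e$ themselves). While these source projections are pairwise orthogonal for distinct $e$, they do not appear cleanly as spectral projections of $TT^*$, because $s_{e_n} s_{e_{n'}}^*$ need not vanish when $r(e_n) = r(e_{n'})$, so edges sharing a common range give rise to genuine off-diagonal terms. To separate edges sharing a common range vertex $w$, I would cut down by the already-recovered $p_w$ and work inside the Morita-equivalent block $p_w C^*(E) p_w$, using the distinctness of the $\lambda_n$ together with auxiliary polynomial expressions in $T$, $T^*$ and the extracted projections to resolve each $s_e s_e^*$ block by block. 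Once $s_e s_e^*$ and $p_{r(e)}$ lie in $C^*(T)$, the partial isometry $s_e$ is recovered as $\lambda_e^{-1}(s_e s_e^*) T p_{r(e)}$, up to an explicit correction $-\mu_m s_e s_e^*$ when $e$ is a loop at an infinite emitter $v_m = s(e) = r(e)$. The infinite-emitter cross-term difficulty in $T^*T$ can be handled either by replacing $p_{v_m}$ in the definition of $T$ with a projection orthogonal to all source projections out of $v_m$, or by recovering $p_{v_m}$ in a separate subsequent step after the edge structure is in hand.
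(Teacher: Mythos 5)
Your reduction to the countable family of edge isometries plus singular-vertex projections is fine, and the first step (orthogonal ranges turning $T^*T$ into a weighted combination of commuting projections) is also how the paper begins. The fatal gap is at the step you yourself call the main obstacle, and it cannot be repaired by ``auxiliary polynomial expressions'': for the element $T$ you define, $\Cstar(T)$ is provably too small, whatever the coefficients. Take the graph with vertices $u_1,u_2,w$ and one edge $e_i$ from $u_i$ to $w$ ($i=1,2$). With your orientation convention ($s_e^*s_e=p_{r(e)}$, $p_{u_i}=s_{e_i}s_{e_i}^*$ at the regular vertices $u_i$), one has $\Cstar(E)\cong M_3$, with $p_w=E_{11}$, $s_{e_1}=E_{21}$, $s_{e_2}=E_{31}$. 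Your candidate $T=\lambda_1 s_{e_1}+\lambda_2 s_{e_2}+\mu p_w$ (the sink $w$ contributes the $\mu$-term) is the rank-one matrix $\mu E_{11}+\lambda_1E_{21}+\lambda_2E_{31}$; every word in $T$ and $T^*$ is a scalar multiple of one of $T,\,T^*,\,T^*T,\,TT^*$, so $\dim \Cstar(T)\le 4<9$ and $T$ generates a proper subalgebra of $M_3$ for every admissible choice of distinct nonzero weights. The moral is that scalar weights on the edge isometries collapse, inside $T^*T$, onto the vertex projections $p_{r(e)}$ and can never separate edges sharing a range; cutting to the corner $p_w\Cstar(E)p_w$ does not help, since it produces nothing outside $\Cstar(T)$. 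The paper fixes this by changing the generator, not the argument: the edge sum is multiplied on the right by $\xi^{\frac12}$, where $\xi$ is a positive element of the commutative subalgebra generated by the vertex projections together with the mutually orthogonal projections $s_es_e^*$, chosen as in Lemma~\ref{lemma:funny-function} so that $d^*d$ generates that entire commutative algebra; the individual $s_es_e^*$, and then the $s_e$, follow (Proposition~\ref{proposition:interior-edges}, Corollary~\ref{corollary:no-sinks}).

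The second gap is the treatment of the cross terms at singular vertices, where neither of your proposed fixes works. A projection orthogonal to all source projections out of an infinite emitter need not exist in $\Cstar(E)$: for one vertex with infinitely many loops, $\Cstar(E)\cong\cO_\infty$, and a projection $q$ with $qs_i=0$ for all $i$ would satisfy $q\,\cO_\infty\,q=\C q$, i.e.\ be a minimal projection in a purely infinite simple \Cstar-algebra, which is impossible. Deferring the recovery of $p_{v_m}$ is circular: if $v_m$ is an infinite emitter receiving no edges (one vertex emitting a single edge to each of infinitely many sinks, say), then $p_{v_m}$ is not in the \Cstar-algebra generated by the edge isometries---which is exactly why you added it to the generating family---so if it is omitted from $T$ it cannot lie in $\Cstar(T)$, while if it is kept, the cross terms invalidate the functional-calculus extraction and you provide no substitute mechanism. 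In the paper (opposite convention, so the troublesome interaction is between sink projections and edges into sinks) this is where most of the work lies: the sink projections $q_m$ and the range projections of sink edges enter the generator with interlaced coefficients $\delta_m<\gamma_{m,n}$, the relevant elements are extracted from limits of normalized powers $\gamma_{M,N}^{-k}g_{M,N}^{\,k}$, and a double induction peels recovered terms off the generator (Lemma~\ref{lemma:proposition-4-part1}, Proposition~\ref{prop:proposition-4-part2}); moreover the interior component $d$ is split off spectrally using countability of the spectrum (Lemmas~\ref{lemma:disjoint-spectra} and~\ref{lemma:countable-spectrum}). No workable analogue of this machinery appears in your sketch.
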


First introduced in \cite{KPRR} (see also \cite{Fowler-Laca-Raeburn}), graph \Cstar-algebras have proven to be a class of algebras which often allow for explicit calculations, direct study of \Cstar-algebraic properties, and testing of conjectures (see for example \cite{RuizSimsTomforde15,SuriTomforde17,EilersGabeKatsuraRuizTomforde20,EilersKatsuraTomfordeWest16,EilersRestorffRuizSorensen21}).
Even though these algebras are rarely simple, loosely speaking, they arise as iterated extensions of AF algebras, Kirchberg algebras and algebras of the form $C(\bT)\otimes M_n$ (\cite{BatesPaskRaeburnWojciech00}), all of which are known to be singly generated (see the paragraph below). Since single generation does not pass to extensions (see \cite{T21}), Theorem~\ref{thm} cannot be deduced directly from this fact. Nevertheless, this suggests that graph \Cstar-algebras are a natural class on which to consider the generator problem.
Observe that Theorem~\ref{thm} does not extend to \Cstar-algebras of higher-rank graphs in full generality - it is known that a $k$-graph can have \Cstar-algebra isomorphic to $C(\bT^k)$ (see \cite[Example~1.7 (iii)]{Kumjian-Pask}).

Let us give a brief overview of known single generation results in the \Cstar-setting. 
Early results (\cite{Topping68,OlsenZame76,LiShen08}) focused on algebras with suitable decomposition properties. This culminated with the work \cite{TW14} which showed that, in particular, any unital separable and $\cZ$-stable \Cstar-algebra is singly generated. We also note that stabilisations of unital separable \Cstar-algebras are singly generated (\cite[Theorem~8]{OlsenZame76}) and so are all AF algebras (\cite{T21}). In a different direction, an unpublished result, due to Kirchberg, states that any unital separable \Cstar-algebra containing two isometries with orthogonal ranges is singly generated (see \cite[p.~138]{Nagisa04} for the statement and \cite[Theorem~2.4]{TW14} for a sketch of the proof in a special case).

The class of graph \Cstar-algebras contains algebras that are stable and/or \cZ-stable (see \cite{Tomforde04,Faurot25}). For the graph algebras falling into the classes above, our methods give an alternative proof for their single generation. The class also contains algebras covered by Kirchberg's result, although those are necessarily unital. Note that the generator problem is often restricted to unital \Cstar-algebras, since a \Cstar-algebra is singly generated if and only if its minimal unitization is. We do not make any unitality assumptions in this paper - indeed, a graph has unital \Cstar-algebra if and only if it has finitely many vertices.

Let us now illustrate the basic idea of our proof in the special case of a finite graph with no sinks. Let $E$ be such a graph with edges $e_1,\dots,e_m$. Writing $s_i$ for the corresponding partial isometries, note that $\Cstar(E) = \Cstar(s_1,\dots,s_m)$. It turns out that if $E$ contains no sinks, the element $d= \left( \sum s_i \right)\zeta$ is a generator for $\Cstar(E)$ for a suitable choice of weight $\zeta$ belonging to the commutative subalgebra generated by all of the vertex projections and range projections of edge partial isometries. Using orthogonality of the ranges of the $s_i$, one finds that $d^*d$ is a weighted sum of the range projections, and hence they all belong to $\Cstar(d)$. Finally, multiplying $d$ on the left by $s_is_i^*$ shows that $s_i$ also belongs to $\Cstar(d)$. We note that the idea of summing scalar multiples of isometries already appeared in the proof sketch of Kirchberg's result in \cite[Theorem~2.4]{TW14}. 

It turns out that the strategy described above makes no use of the unit and so extending to countable graphs without sinks is straightforward (see Corollary~\ref{corollary:no-sinks}). The main obstacles are the sinks in $E$. More specifically, in the presence of sinks receiving infinitely many edges, the graph \Cstar-algebra is no longer generated by the edge partial isometries. We adjust the generator to be a sum of three terms, each designed to handle a different portion of the graph. The first captures the sinks and the edges terminating at them, the second captures all the edges ``sufficiently far" from the sinks, and the third takes care of the remaining edges and helps glue the first two terms together. A careful double induction argument lets us recover each edge partial isometry and vertex projection from the generator.

The paper is organised as follows: Section \ref{sec:prelims} recalls some basic facts about spectral theory and graph \Cstar-algebras, while Section \ref{sec:no-sink} treats the special case of a graph $E$ with no sinks. In Section \ref{sec:g} we construct the candidate generator for $\Cstar(E)$ and then verify that it is indeed a generator in Section \ref{sec:proof}.

\section*{Funding}
\begin{sloppypar}
The authors would like to thank the Isaac Newton Institute for Mathematical Sciences, Cambridge, for support and hospitality during the programme {\bf{Topological groupoids and their $C^{*}$-algebras}}, where work on this paper was undertaken. This work was supported by EPSRC grant EP/V521929/1. JC was supported by EPSRC grant EP/X026647/1. JG was supported by EPSRC grant EP/T517896/1. For the purpose of Open Access, the authors have applied a CC BY public copyright licence to any Author Accepted Manuscript (AAM) version arising from this submission.
\end{sloppypar}

\section*{Acknowledgments}

The authors would like to thank Brian Chan for helpful discussions about topics related to this paper.

\section{Preliminaries and basic facts - spectra}
\label{sec:prelims}

To fix notational conventions, let us recall the definition of a directed graph and its associated \Cstar-algebra.

\begin{defin}
    A \emph{directed graph} $E = (E^0, E^1, r, s)$ is a quadruple consisting of countable \emph{vertex} and \emph{edge} sets $E^0$ and $E^1$, along with \emph{range} and \emph{source} maps $r,s\colon E^1 \to E^0$.
\end{defin}

\begin{defin}[{\cite[Definition~1]{Fowler-Laca-Raeburn}}]\label{def:graph}
    Let $ E = (E^0, E^1, r, s)$ be a directed graph. A \emph{Cuntz-Krieger $E$-family} in a \Cstar-algebra $A$ consists of a family $\{p_v \colon v \in E^0\} \subseteq A$ of mutually orthogonal projections and a family $\{s_e \colon e \in E^1\} \subseteq A$ of partial isometries with mutually orthogonal ranges such that
    \begin{enumerate}[label=(\roman*)]
        \item $s_e^*s_e = p_{s(e)}$ for all $e \in E^1$;
        \item $s_e s_e^* \le p_{r(e)}$ for all $e \in E^1$; and
        \item $p_v = \sum_{r(e)=v} s_e s_e^*$ whenever $0 < |r^{-1}(v)| < \infty$.
    \end{enumerate}
    The \emph{graph \Cstar-algebra} $\Cstar(E)$ is the universal $\Cstar$-algebra generated by a Cuntz-Krieger $E$-family.
\end{defin}

We use the convention that edges and their associated partial isometries go in the same direction, following \cite{Raeburn}. This is in contrast to \cite{Fowler-Laca-Raeburn} and \cite{Rangaswamy-Tomforde}, for example, where the opposite convention is adopted.    
Universality of $\Cstar(E)$ will not be important for showing that it is singly generated; we prove directly that any \Cstar-algebra generated by a Cuntz-Krieger $E$-family is singly generated.

Let us introduce some notation for edges and vertices in a graph. The terms \emph{interior} and \emph{boundary} will describe how far an edge or vertex is from a sink. The generator we construct in Section~\ref{sec:g} will have 3 components coming from interior edges, boundary edges and sinks.

\begin{notation}
    Let $E = (E^0, E^1, r, s)$ be a directed graph. A vertex $v \in E^0$ is called a \emph{sink} if $s^{-1}(v) = \emptyset$ (that is, if $v$ does not emit any edges); we denote by $\Esink \subseteq E^0$ the set of sinks. We shall call a vertex $v \in E^0$ a \emph{boundary vertex} if it is not a sink and $r(s^{-1}(v)) \subseteq \Esink$ (that is, if all edges emitted from $v$ terminate at a sink); we denote by $\Eboundary \subseteq E^0$ the set of boundary vertices. We shall call all other vertices \emph{interior} vertices; we denote by $\Eint \subseteq E^0$ the collection of interior vertices (so $E^0 = \Eint \sqcup \Eboundary \sqcup \Esink$).

    We call an edge $e \in E^1$ a \emph{sink edge} if $r(e) \in \Esink$, a \emph{boundary edge} if $r(e) \in \Eboundary$, and an \emph{interior edge} if $r(e) \in \Eint$. We denote by $\edgesink \subseteq E^1$ the set of sink edges, by $\edgebdy \subseteq E^1$ the set of boundary edges, and by $\edgeint \subseteq E^1$ the set of interior edges.
\end{notation}

\begin{notation}
    Let $E$ be a directed graph. For vertices $v,w \in E^0$ we denote by $p_v, q_w \in \Cstar(E)$ the associated projections, and for edges $e,f \in E^1$ we denote by $s_e, t_f \in \Cstar(E)$ the associated partial isometries. We call $s_e s_e^*$ the \emph{range projection} associated with the edge $e$. From now on, $w$ will exclusively be used to denote a sink, and $f$ will exclusively be used to denote a sink edge.
\end{notation}

\subsection{Spectra} In this subsection we record two basic facts about spectra. The first is a general fact about orthogonal normal elements in a $\Cstar$-algebra. The second states that certain self-adjoint elements in a graph $C^*$-algebra have countable spectrum, and will be crucial for our construction of the generator in Section~\ref{sec:g}.

If $A$ and $B$ are singly generated \Cstar-algebras and $A$ is unital\footnote{Note that the direct sum of two non-unital singly generated \Cstar-algebras need not be singly generated. Consider $C_0(X) \oplus C_0(X)$ where $X$ is the punctured disk, for example.}, then the direct sum $A\oplus B$ is also singly generated\footnote{Let $a$ and $b$ be their respective generators and without loss of generality assume that $\norm{a},\norm{b} < \frac{1}{2}$. After taking real and imaginary parts, an application of functional calculus reveals that $(1_A+ i1_A + a, b)$ is a generator for $A\oplus B$.}.
We will need the following simple variation of this fact in the commutative setting, where we drop the assumption that $A$ is unital and instead ask for control over the spectra of the generators $a \in A$ and $b \in B$.

\begin{lemma}
\label{lemma:disjoint-spectra}
    Let $A$ be a \Cstar-algebra and $a,b\in A$ two orthogonal normal elements with
    \begin{equation}\label{eq:disjoint-spectra}
        \sigma(a)\setminus\{0\} \cap \sigma(b)\setminus\{0\} =\emptyset\,.
    \end{equation} 
    Then $a,b\in\Cstar(a+b)$.
\end{lemma}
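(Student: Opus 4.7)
The plan is to extract $a$ and $b$ from $c := a + b$ via continuous functional calculus. Normality and orthogonality of $a,b$ make $c$ normal and force the unital $\Cstar$-algebra $B := \Cstar(1, a, b)$ inside the unitisation $\tilde{A}$ to be commutative. Writing $B \cong C(X)$ by Gelfand duality, the relation $ab = 0$ forces the open sets $U := \set{x \in X : a(x) \neq 0}$ and $V := \set{x \in X : b(x) \neq 0}$ to be disjoint; consequently $c = a$ on $U$, $c = b$ on $V$, and $c = 0$ elsewhere, and in particular $\sigma(c) \subseteq \sigma(a) \cup \sigma(b)$.

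The heart of the argument is to produce a continuous function $f$ on $\sigma(c)$ with $f(0) = 0$ satisfying $f(c) = a$. I would define $f$ on the compact set $K := \sigma(a) \cup \sigma(b)$ by the two-case rule $f(\lambda) = \lambda$ for $\lambda \in \sigma(a)$ and $f(\lambda) = 0$ for $\lambda \in \sigma(b)$. Hypothesis \eqref{eq:disjoint-spectra} makes this well defined at the only possible common point $\lambda = 0$ and, together with closedness of $\sigma(a)$ and $\sigma(b)$, yields continuity of $f$ on $K$. Restricting to $\sigma(c) \subseteq K$ then gives $f(c) \in \Cstar(c)$, and a pointwise check on $X$ confirms $f(c) = a$: on $U$ we have $c(x) = a(x) \in \sigma(a) \setminus \{0\}$ so $f(c)(x) = c(x) = a(x)$; on $V$ we have $c(x) = b(x) \in \sigma(b) \setminus \{0\}$ so $f(c)(x) = 0 = a(x)$; outside $U \cup V$ every function in sight vanishes. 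Interchanging the roles of $a$ and $b$ yields a continuous $g$ with $g(c) = b$, and hence $a,b \in \Cstar(a+b)$.

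The main obstacle is verifying continuity of $f$ on $K$, specifically at $0$ and at any point where a sequence could cross between $\sigma(a)$ and $\sigma(b)$; this is precisely where hypothesis \eqref{eq:disjoint-spectra} does its work, since it forbids any sequence in $\sigma(b)$ from approaching a nonzero point of $\sigma(a)$ or vice versa. Once continuity of $f$ is secured, the remainder of the argument is routine commutative $\Cstar$-theory.
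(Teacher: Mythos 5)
Your argument is correct, and it takes a mildly different route from the paper's, so a comparison is worthwhile. The paper exploits orthogonality to decompose $\Cstar(a,b)\cong \Cstar(a)\oplus\Cstar(b)\cong C_0\big((\sigma(a)\setminus\{0\})\sqcup(\sigma(b)\setminus\{0\})\big)$, uses \eqref{eq:disjoint-spectra} to identify this disjoint union with the subset $\big(\sigma(a)\cup\sigma(b)\big)\setminus\{0\}$ of $\C$, observes that $a+b$ corresponds to the identity function there, and concludes by Stone--Weierstrass that $a+b$ generates all of $\Cstar(a,b)$. You instead stay inside $\Cstar(a+b)$ and extract $a$ (and symmetrically $b$) directly by continuous functional calculus of the normal element $c=a+b$, applied to the pasted function $f$ (identity on $\sigma(a)$, zero on $\sigma(b)$, with $f(0)=0$); the hypothesis \eqref{eq:disjoint-spectra} enters exactly through well-definedness of $f$ on the overlap and, together with closedness of the two spectra, through the closed pasting lemma giving continuity. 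The two proofs carry the same Gelfand-theoretic content --- your $f$ is implicit in the paper's identification $\Phi$ --- but yours avoids the explicit direct-sum isomorphism and Stone--Weierstrass, at the cost of verifying continuity of $f$ and checking $f(c)=a$ pointwise on the character space, which you do correctly (including the point that $\sigma(c)\subseteq\sigma(a)\cup\sigma(b)\cup\{0\}$ and $f(0)=0$, so the non-unital functional calculus lands in $\Cstar(c)$). One small step you should make explicit: commutativity of $\Cstar(1,a,b)$ requires $a^*b=ab^*=0$ and not just $ab=ba=0$; for normal $a,b$ this follows from the $\Cstar$-identity, e.g. $\norm{a^*b}^2=\norm{b^*aa^*b}=\norm{b^*a^*ab}=\norm{ab}^2=0$, so the gap is cosmetic (and vacuous if one reads ``orthogonal'' as including these relations, as the paper implicitly does), but it deserves a line.
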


\begin{proof}
    By orthogonality, 
    \begin{align*}
        \Cstar(a,b)&\cong \Cstar(a)\oplus\Cstar(b)\numberthis\\
        &\cong C_0\big(\sigma(a)\setminus \{0\}\big)\oplus C_0\big(\sigma(b)\setminus \{0\}\big) \quad\\
        &\cong C_0\big( (\sigma(a) \setminus \{0\}) \sqcup (\sigma(b) \setminus \{0\} )\big)\,.
    \end{align*}
    Since $\sigma(a)$ and $\sigma(b)$ are compact, it follows from \eqref{eq:disjoint-spectra} that the disjoint union $(\sigma(a) \setminus \{0\}) \sqcup (\sigma(b) \setminus \{0\} )$ is homeomorphic to $U \coloneq \big(\sigma(a)  \cup \sigma(b)\big) \setminus \{0\} \subseteq \C$. Under the identification $\Phi \colon C^*(a,b) \xrightarrow{\cong} C_0(U)$ we find
    \begin{align}
        \Phi(a)(z)&=\begin{cases}
            z\quad\text{if }z\in\sigma(a)\\
            0\quad\text{if }z\in \sigma(b)
        \end{cases}\\
        \Phi(b)(z)&=\begin{cases}
            0\quad\text{if }z\in\sigma(a)\\
            z\quad\text{if }z\in \sigma(b)
        \end{cases}\\
        \Phi(a+b)(z)&=z \quad \text{for all } z \in U\,.
    \end{align}
    Since $\Phi(a+b)$ separates points and vanishes nowhere in $U$, it generates $C_0(U)$ as a $C^*$-algebra (by the locally compact version of the Stone-Weierstrass theorem). Hence $a,b \in C^*(a+b)$ as desired.
\end{proof}

In order to apply Lemma \ref{lemma:disjoint-spectra} in our setting, we will need the following to control the spectra of certain self-adjoint elements in graph \Cstar-algebras.

\begin{lemma}
\label{lemma:countable-spectrum}
    Let $E$ be a directed graph.
    Let $B$ be the $C^*$-subalgebra of $C^*(E)$ generated by $\Esink \cup \edgesink \cup \edgebdy$.
    Then every self-adjoint element of $B$ has countable spectrum.
\end{lemma}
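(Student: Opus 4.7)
My plan is to identify $B$ with the graph $C^*$-algebra $C^*(F)$ of a natural acyclic subgraph $F$ of $E$, and then exploit the layered structure of $F$ to deduce that $B$ is scattered, from which countable spectra of self-adjoint elements follows as a standard consequence. Define $F$ by $F^0 = \Esink \cup \Eboundary \cup \set{u \in \Eint : u = s(e) \text{ for some } e \in \edgebdy}$ and $F^1 = \edgesink \cup \edgebdy$, with source and range maps inherited from $E$. Since every edge in $F^1$ has range in $\Esink \cup \Eboundary$ and sinks emit no edges, paths in $F$ traverse strictly through the layers Interior $\to$ Boundary $\to$ Sink, and hence have length at most $2$; in particular $F$ is acyclic, so condition (L) holds vacuously.

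The isomorphism $B \cong C^*(F)$ will follow from the Cuntz-Krieger Uniqueness Theorem. The vertex projections $p_v$ already lie in $B$ for every $v \in F^0$: they are generators when $v \in \Esink$; when $v \in \Eboundary$ the vertex $v$ emits at least one sink edge $f$ (boundary vertices are non-sinks emitting only to sinks), so $p_v = s_f^* s_f \in B$; and when $v \in F^0 \cap \Eint$, $v$ emits some $e \in \edgebdy$ by construction, so $p_v = s_e^* s_e \in B$. Together with $\{s_e\}_{e \in F^1} \subseteq B$, these form a Cuntz-Krieger $F$-family in $B$ which clearly generates $B$. By universality there is a surjection $C^*(F) \to B$, which is injective because all $p_v$ are nonzero in $C^*(E) \supseteq B$ and $F$ is acyclic.

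The final step is to show that $C^*(F)$ is scattered. I would construct a composition series $0 \subseteq I_1 \subseteq I_2 \subseteq B$ corresponding to the three layers of $F$---for instance, taking $I_1$ to be the closed two-sided ideal generated by $\{s_e s_e^* : e \in \edgebdy\}$ and $I_2$ the closed two-sided ideal additionally generated by $\{s_f s_f^* : f \in \edgesink\}$. Using the acyclic layered structure of $F$, I expect each successive subquotient $I_1$, $I_2/I_1$, $B/I_2$ to be a $c_0$-direct sum of elementary $C^*$-algebras (compact operators on appropriate path spaces), which is scattered; since scatteredness is closed under extensions, $B$ itself is scattered, and the countable-spectrum property for self-adjoint elements follows. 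The main obstacle I anticipate is at vertices of $F$ receiving infinitely many edges, where the Cuntz-Krieger relation (iii) does not apply: the vertex projection then carries a residual component orthogonal to the sum of the incoming range projections, and this residual piece must be carefully separated out before identifying each subquotient with a $c_0$-direct sum of elementary algebras.
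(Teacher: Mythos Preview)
Your approach matches the paper's almost exactly: both identify $B$ with $C^*(F)$ for the acyclic subgraph $F$ with edge set $\edgesink \cup \edgebdy$, and both deduce countable spectra from the fact that $C^*(F)$ is scattered. Two small points are worth flagging. First, your $F^0$ is slightly too small: an interior vertex can emit a sink edge without emitting any boundary edge (it need only emit one further edge to a non-sink), so $F^0$ must also contain $\{u \in \Eint : u = s(f) \text{ for some } f \in \edgesink\}$; otherwise some edges in $F^1$ have no source in $F^0$. Second, you assert that the $\{p_v, s_e\}$ form a Cuntz--Krieger $F$-family without checking relation~(iii); the paper handles this by observing that $F$ is \emph{entrance complete} (every $E$-edge with range in $F^0$ already lies in $F^1$), so the summation relation at each vertex is inherited verbatim from $E$.

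The only substantive difference is in the final step. The paper simply cites \cite[Theorem~6.5]{Rangaswamy-Tomforde} (an acyclic graph with countably many paths has scattered $C^*$-algebra) together with \cite{Huruya} (scattered $\Leftrightarrow$ every self-adjoint element has countable spectrum), whereas you propose to establish scatteredness by hand via a three-step composition series. Your plan is sound in spirit---it is essentially a special case of the cited result---but the specific ideals you name would need adjustment: the subquotients will not decompose cleanly into $c_0$-sums of elementary algebras until you also peel off, at each stage, the ``residual'' subprojections $p_v - \sum s_e s_e^*$ at infinite receivers (exactly the obstacle you anticipate). The paper's route avoids this bookkeeping entirely by appealing to the literature; yours would be more self-contained but correspondingly longer.
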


\begin{proof}
    Let $F$ be the subgraph of $E$ consisting only of the edges $\edgesink \cup \edgebdy$ along with their ranges and sources, together with $\Esink$. Since $F$ is entrance complete (in the sense that for each edge $e \in F^1$, we have $r_E^{-1}(r_E(e)) \subseteq F^1$), the family
    \begin{equation}
        \{p_v : v \in \Esink \cup \Eboundary\} \cup \{s_e : e \in \edgesink \cup \edgebdy\}
    \end{equation}  
    is a Cuntz-Krieger $F$-family in $C^*(E)$, so there is a $*$-homomorphism $\varphi\colon C^*(F) \to B$. Clearly $\varphi$ is surjective; and since $F$ is acyclic, the Cuntz-Krieger uniqueness theorem (\cite[Theorem~2]{Fowler-Laca-Raeburn})\footnote{As noted earlier, this paper uses the opposite convention for the direction of edge partial isometries to that used in \cite{Fowler-Laca-Raeburn} and \cite{Rangaswamy-Tomforde}.} shows that $\varphi$ is injective and hence an isomorphism.

    Note that $F$ is acyclic and has countably many paths, since the maximum path length in $F$ is $2$. Thus \cite[Theorem~6.5]{Rangaswamy-Tomforde} together with \cite{Huruya} give that every self-adjoint element of $\Cstar(F) \cong B$ has countable spectrum, as required.
\end{proof}

\section{Graphs with no sinks}
\label{sec:no-sink}

In this section we present a short proof that the \Cstar-algebra of a graph with no sinks is singly generated. As in Section~\ref{sec:proof}, a key feature of graph \Cstar-algebras we use is that edge partial isometries have mutually orthogonal range projections. As remarked on in the introduction, a similar idea is used in the proof of \cite[Theorem~2.4]{TW14}.

The generator $d$ in this section is of the form $\left( \sum \epsilon_e {s_e} \right) \xi^{\frac{1}{2}}$, where $\sum \epsilon_e {s_e}$ is a weighted sum of all edge partial isometries, and $\xi$ is a positive element belonging to the (commutative) \Cstar-subalgebra generated by all vertex and range projections. The element $\xi$ is chosen so that the \Cstar-subalgebra generated by $d^*d$ contains all range projections $s_e s_e^*$. Range projections are pairwise orthogonal, so multiplying the generator $d$ on the left by $s_e s_e^*$ shows that $s_e$ also belongs to $\Cstar(d)$. Since the \Cstar-algebra of a graph with no sinks is generated by its edge partial isometries, it follows that $d$ generates $\Cstar(E)$.

The following lemma describes the construction of $\xi$, and will also be used in Section~\ref{sec:g}. Roughly speaking, the existence of an appropriate positive element $\xi$ is guaranteed because any commutative \Cstar-algebra generated by a family of commuting projections has 0-dimensional spectrum and, in particular, is therefore singly generated.

\begin{lemma}\label{lemma:funny-function}
    Let $S \subseteq E^1$ be a set of edges in a directed graph $E$. Let $V \coloneq s(S) \subseteq E^0$ be the set of sources of edges in $S$, and let $T \coloneq r^{-1}(V) \subseteq E^1$ be the set of all edges that have range in $V$. Define 
    \begin{equation}
        A \coloneq \Cstar(p_v, s_e s_e^* \colon v \in V, e \in T) \subseteq \Cstar(E)\,.
    \end{equation}
    
    Let $(\epsilon_e)_{e \in S}$ be positive real numbers satisfying $\sum_{e \in S} \epsilon_e < \infty$. There exists a positive element $\xi \in A$ of norm at most 1 for which
    \begin{equation}
    d \coloneq \left( \sum_{e \in S} \epsilon_e s_e \right)\xi^{\frac{1}{2}}
    \end{equation}
    satisfies the following:
    \begin{enumerate}[label=(\roman*)]
        \item $A = \Cstar(d^*d)$;
        \item for any $v \in V$, there exists $h_v \in A$ such that $p_v = \xi^{\frac{1}{2}} h_v$.
    \end{enumerate}
    Moreover, given any countable compact set $C \subseteq \C$, the element $\xi$ can be chosen so that $d^*d$ has spectrum disjoint from $C \setminus \{0\}$.
\end{lemma}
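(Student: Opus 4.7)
The plan is to exploit the fact that $A$ is commutative (being generated by mutually commuting projections) and to work in its Gelfand spectrum. Since the projections $\{p_v : v \in V\}$ are pairwise orthogonal and each $s_e s_e^*$ (for $e \in T$) satisfies $s_e s_e^* \leq p_{r(e)}$ with $r(e) \in V$, the subalgebras $A_v := \Cstar(p_v, \{s_e s_e^* : e \in r^{-1}(v)\})$ are pairwise orthogonal and jointly generate $A$, giving $A \cong \bigoplus^{c_0}_{v \in V} A_v$. Each $A_v$ is a unital commutative $\Cstar$-algebra generated by $p_v$ and a countable orthogonal family of sub-projections, hence is $*$-isomorphic to $C(X_v)$ for a countable compact $0$-dimensional Hausdorff space $X_v$ (either a finite discrete set, or the one-point compactification of a countable discrete set). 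The spectrum of $A$ is $X = \bigsqcup_v X_v$.

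Orthogonality of the ranges of the $s_e$ gives $s_e^* s_{e'} = 0$ for $e \neq e'$, so a direct computation yields
\begin{equation*}
d^*d = \xi \sum_{v \in V} \alpha_v p_v, \qquad \alpha_v := \sum_{\substack{e \in S \\ s(e) = v}} \epsilon_e^2 > 0.
\end{equation*}
Viewing $d^*d$ as a function on $X$, it equals $\alpha_v \xi(x)$ at $x \in X_v$. I will construct $\xi$ so that this function is bounded below by a positive constant on each $X_v$, is injective on all of $X$, and has image disjoint from $C \setminus \{0\}$. The lower bound gives (ii) via $h_v := (\xi^{1/2}|_{X_v})^{-1}$ extended by $0$, which lies in $A_v \subseteq A$ and satisfies $\xi^{1/2} h_v = p_v$. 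The lower bound together with injectivity gives (i), via the standard criterion that a self-adjoint element of $C_0(X)$ generates the whole algebra iff it separates points and vanishes nowhere. The third property gives the moreover clause, since $\sigma(d^*d) \setminus \{0\} = d^*d(X)$.

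To construct $\xi$, enumerate $V = (v_n)_{n \geq 1}$ and inductively choose $\xi|_{X_{v_n}} \in C(X_{v_n})$: a continuous injective map into an interval $[\delta_{v_n}, 2^{-n}]$ with $\delta_{v_n} > 0$, whose values avoid the countable forbidden set $\alpha_{v_n}^{-1}\bigl(C \cup \bigcup_{m < n} \alpha_{v_m} \xi(X_{v_m})\bigr)$. Since $X_{v_n}$ is finite or the one-point compactification of $\mathbb{N}$, and the complement of a countable set in the interval is uncountable and dense, such a function can be built one point at a time, respecting continuity at the accumulation point $\infty$ of $X_{v_n}$ in the infinite case (e.g., by requiring $|\xi(e_k) - \xi(\infty)| \leq 1/k$ for an enumeration $T_{v_n} = (e_k)$). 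The decay $\|\xi|_{X_{v_n}}\| \leq 2^{-n}$ ensures $\xi \in A$ (via the $c_0$-direct sum identification) and $\|\xi\| \leq 1/2$. The main technical obstacle is this juggling act --- continuity, injectivity, positive lower bound, disjointness of the scaled ranges across $n$, and avoidance of $C$ must all hold simultaneously --- but the uncountability of the target interval against the countability of the forbidden set at each stage makes the recursive choice routine.
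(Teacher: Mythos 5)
Your proposal is correct and follows essentially the same route as the paper: identify $A$ with $C_0$ of a $0$-dimensional spectrum partitioned by the clopen supports of the $p_v$, compute $d^*d = \xi\sum_v \alpha_v p_v$ from orthogonality of ranges, choose $\xi$ injective and bounded below on each piece with the scaled images pairwise disjoint and avoiding $C$, and conclude (i) by Stone--Weierstrass and (ii) by inverting $\xi^{1/2}$ on each clopen piece. The only difference is cosmetic: where the paper fixes disjoint intervals avoiding the compact countable set $C$ and invokes the embedding of second-countable $0$-dimensional spaces into an interval, you describe each $X_v$ explicitly (finite discrete or one-point compactification of $\mathbb{N}$) and build $\xi$ pointwise avoiding a countable forbidden set, with the $2^{-n}$ decay playing the role of the paper's summability condition ensuring $\xi \in A$ and that the closure of the range of $d^*d$ adds only $0$.
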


\begin{proof}
    Set $\mu_v \coloneq \sum_{\substack{e \in S \cap s^{-1}(v)}} \epsilon_e^2$ for each $v \in V$. Note that each $\mu_v$ is finite and strictly positive because the set $S \cap s^{-1}(v)$ is non-empty for every $v \in V$. Let $C \subseteq \C$ be countable and compact, so that for any $\varepsilon>0$ there exists an interval $[\theta, \theta'] \subseteq (0,\varepsilon)$ of non-zero length such that $C\cap [\theta,\theta']=\emptyset$. Let $\big\{[\theta_v,\theta'_v]\big\}_{v \in V} \subseteq (0, 1)$ be a set of pairwise disjoint intervals of non-zero length such that $C\cap [\theta_v,\theta'_v]=\emptyset$ and $\frac{\theta_v'}{\mu_v} \le 1$ for all $v \in V$. If $V$ is infinite, we also require that for any $\varepsilon >0$, there only exist finitely many $v \in V$ for which $\frac{\theta_v'}{\mu_v} > \varepsilon$.
    
    The algebra $A$ is commutative with 0-dimensional spectrum. Let $\Omega$ denote its spectrum, and let $\Psi \colon A \xrightarrow{\cong} C_0(\Omega)$ be the canonical isomorphism. For each $v \in V$, let $K_v \subseteq \Omega$ be the compact open subset satisfying $\Psi(p_v) = \mathbbm{1}_{K_v}$. Note that $\Omega$ is the disjoint union of the $K_v$. Let $\xi \in A$ be such that for each $v \in V$, the restriction $\Psi(\xi)|_{K_v}$ is injective and has range in $[\frac{\theta_v}{\mu_v}, \frac{\theta_v'}{\mu_v}]$\footnote{This is possible because any second-countable 0-dimensional space embeds into the interval.}. Note that $\xi$ has norm at most 1.
    
    Set
    $
    r \coloneq \sum_{e \in S} \, \epsilon_e^2 \, s_e^* s_e \in A
    $
    and note that $r\xi = d^*d$ by orthogonality of ranges of the partial isometries $s_e$. On each compact open set $K_v$, the function $\Psi(r)$ takes constant value $\mu_v$, and therefore $\Psi(d^*d)|_{K_v}$ is injective and takes values in $[\theta_v, \theta_v']$. The function $\Psi(d^*d)$ is therefore strictly positive and injective on $\Omega$, and hence generates $C_0(\Omega)$ as a $\Cstar$-algebra (by the locally compact version of the Stone-Weierstrass theorem). It follows that $A = C^*(d^*d)$. This proves (i). Note also that $\Psi(d^*d)$ has range disjoint from $C \setminus \{0\}$, and hence $d^*d$ has spectrum disjoint from $C \setminus \{0\}$.

    We now prove (ii). Let $v \in V$. The function $\Psi(\xi)$ has range in $[\frac{\theta_{v}}{\mu_{v}}, \frac{\theta_{v}'}{\mu_{v}}]$ when restricted to the compact open set $K_{v}$. Since $\frac{\theta_{v}}{\mu_{v}} >0 $, there exists $h_v \in A$ such that the function $\Psi(h_v) \in C_0(\Omega)$ satisfies $\Psi\big(\xi\big)^{\frac{1}{2}} \Psi(h_v) = \mathbbm{1}_{K_{v}}$. Therefore $\xi^{\frac{1}{2}}h_v = p_v$, as desired.
\end{proof}

The following is now an immediate consequence of the preceding lemma.

\begin{coro}\label{corollary:no-sinks}
    Let $E$ be a directed graph with no sinks. Then $C^*(E)$ is singly generated.
\end{coro}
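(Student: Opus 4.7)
The plan is to apply Lemma \ref{lemma:funny-function} with the choice $S = E^1$ and verify that the resulting element $d$ generates all of $\Cstar(E)$. Since $E$ has no sinks, every vertex emits an edge, so $V = s(E^1) = E^0$, and trivially $T = r^{-1}(E^0) = E^1$. Hence the algebra $A$ produced by the lemma contains every vertex projection $p_v$ and every range projection $s_e s_e^*$. Choose any summable family of positive weights $(\epsilon_e)_{e \in E^1}$ (possible since $E^1$ is countable), let $\xi \in A$ be the positive element supplied by the lemma, and set $d = \bigl(\sum_{e \in E^1} \epsilon_e s_e\bigr)\xi^{1/2}$.

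The first step is to note that by conclusion (i) of the lemma, $A = \Cstar(d^*d) \subseteq \Cstar(d)$. In particular every $p_v$ and every $s_e s_e^*$ lies in $\Cstar(d)$.

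The second step is to recover each partial isometry $s_e$. Using the fact that the range projections $\{s_f s_f^*\}_{f \in E^1}$ are mutually orthogonal, multiplying $d$ on the left by $s_e s_e^*$ kills every summand except the one indexed by $e$, yielding
\begin{equation}
s_e s_e^*\, d \;=\; \epsilon_e\, s_e\, \xi^{1/2}.
\end{equation}
Now invoke conclusion (ii): there exists $h_{s(e)} \in A$ with $\xi^{1/2} h_{s(e)} = p_{s(e)}$. Multiplying on the right and using $s_e p_{s(e)} = s_e$, we obtain
\begin{equation}
s_e \;=\; \tfrac{1}{\epsilon_e}\, (s_e s_e^*\, d)\, h_{s(e)} \;\in\; \Cstar(d),
\end{equation}
since both $s_e s_e^*$ and $h_{s(e)}$ lie in $A \subseteq \Cstar(d)$.

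Finally, since $\Cstar(E)$ is generated by the family $\{p_v : v \in E^0\} \cup \{s_e : e \in E^1\}$ and all of these elements have now been shown to lie in $\Cstar(d)$, we conclude $\Cstar(d) = \Cstar(E)$. The only potential obstacle is really the no-sinks hypothesis: it is exactly what ensures $V = E^0$ (so that (ii) covers every vertex projection) and that the Cuntz--Krieger family $\{p_v, s_e\}$ suffices to generate $\Cstar(E)$; without it, sinks receiving infinitely many edges would not be captured, which is precisely the difficulty the rest of the paper is devoted to resolving.
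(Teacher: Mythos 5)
Your proposal is correct and follows essentially the same route as the paper: invoke Lemma \ref{lemma:funny-function} with $S = E^1$, use conclusion (i) to place $A$ (hence all range projections) inside $\Cstar(d)$, and use orthogonality of ranges together with conclusion (ii) to recover each $\epsilon_e s_e = s_e s_e^*\, d\, h$. The only cosmetic difference is that the paper phrases (ii) via $\xi^{1/2}h = s_e^*s_e$ while you use $p_{s(e)}$, which is the same element by the Cuntz--Krieger relation (i).
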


\begin{proof}
    Let $S = E^1$ be the set of all edges in $E$. Let $V$ and $T$ be as in Lemma~\ref{lemma:funny-function}. Since $E$ has no sinks, we have that $V = E^0$ and $T = E^1$. Let $(\epsilon_e)_{e \in E^1}$ be any sequence of positive real numbers satisfying $\sum_{e \in E^1} \epsilon_e < \infty$, and let $A$, $\xi$ and $d$ be as in Lemma~\ref{lemma:funny-function}. Take arbitrary $e \in E^1$, and let $h \in A = \Cstar(d^*d)$ be such that $\xi^{\frac{1}{2}}h = s_e^* s_e$. Since $s_e s_e^* \in A = \Cstar(d^*d)$, we have that
    \begin{equation}
    \epsilon_e s_e = s_e s_e^* d h
    \end{equation}
    belongs to $\Cstar(d)$. It follows that $\Cstar(d) = \Cstar(E)$ because $\Cstar(E)$ is generated by its edge partial isometries.  
\end{proof}

\section{A generator for $\Cstar(E)$}
\label{sec:g}

Let $E$ be a directed graph. In this section we construct an element $g\in \Cstar(E)$ which will turn out to be a generator of $\Cstar(E)$.

\begin{notation}\label{label}
    Let $\{w_1, w_2, \dots\} = \Esink$ denote the set of sinks in $\Cstar(E)$. For each sink $w_m$, let $\{f_{m,1}, f_{m,2}, \dots\} = r^{-1}(w_m)$ denote the set of edges terminating at $w_m$. Note that $\{f_{m,n}\}_{m,n \ge 1}$ is equal to $\edgesink$, the set of all sink edges. For each sink $w_m$ let ${q_m \coloneq q_{w_{m}} \in \Cstar(E)}$ denote the associated projection, and for each sink edge $f_{m,n}$ let $t_{m,n} \coloneq t_{f_{m,n}} \in \Cstar(E)$ denote the associated partial isometry.

    For each $v \in \Eboundary$, let $m(v) \in \N$ be the smallest positive integer for which there exists an edge $f$ with $s(f) = v$ and $r(f) = w_{m(v)}$. Write $V_\ell \coloneq \{ v \in \Eboundary \colon m(v) = \ell\}$ for each $\ell \in \N$. Note that $V_1, V_2, \dots$ are pairwise disjoint, and $\Eboundary$ is equal to their union. 

    Set $Y \coloneq s(\edgebdy) \subseteq \Eint$, the set of (interior) vertices $y$ for which there exists a (boundary) edge $e \in \edgebdy$ with $s(e) = y$. For each $y \in Y$, let ${\{e_{y, 1},e_{y, 2}, \dots \} = \edgebdy \cap s^{-1}(y)}$ be the set of all boundary edges with source equal to $y$. Note that $\{e_{y,n}\}_{y \in Y, n \ge 1}$ is equal to $\edgebdy$, the set of all boundary edges. For each boundary edge $e_{y,n}$ let $s_{y,n} \coloneq s_{e_{y,n}} \in \Cstar(E)$ denote the associated partial isometry.\footnote{Note that any of the sets $Y$, $V_\ell$, $\{w_1, w_2, \dots\}, \ \{f_{m,1}, f_{m,2}, \dots\}$ may be finite or empty. Given $y \in Y$, the set $\{e_{y, 1},e_{y, 2}, \dots \}$ may also be finite.}
\end{notation}

Let us comment on the labelling of edges and vertices. Our goal in Section~\ref{sec:proof} will be to show that all edge partial isometries and sink projections belong to $\Cstar(g)$, where $g$ denotes the candidate generator for $\Cstar(E)$. If a family of projections or partial isometries has been enumerated in Notation~\ref{label}, then we will use induction to show that all elements of this family belong to $\Cstar(g)$ (see Lemma~\ref{lemma:boundary-edges} and Proposition~\ref{prop:proposition-4-part2}).

The following lemma is immediate by the Cuntz-Krieger relations in Definition~\ref{def:graph}.
\begin{lemma}\label{lemma:orthogonality}
    We have the following relations for all integers $m,n,m',n',k \ge 1$, edges $e, e' \in E^1$, and vertices $y \in Y$:
    \begin{align*}
    s_e^* s_{e'} &= 0 &&\text{ if } e \neq e'\,,\numberthis\\
    q_m &\perp s_e  &&\text{ if } e \in \edgebdy \cup \edgeint\,,\\
    q_m &\perp p_v  &&\text{ if } v \in \Eboundary \cup \Eint\,,\\    
    p_v t_{m,n} &= 0 &&\text{ if } v \in \Eboundary \cup \Eint\,,\\
    t_{m,n} t_{m',n'} &= 0\,, \\
    s_{y,k} s_{y,n}^* s_{y,n} &= s_{y,k}\,.
    \end{align*}
    Moreover, 
    \begin{equation}
        q_{m'} t_{m,n} =
    \begin{cases}
        t_{m,n} \quad &\text{if } m = m'\\
        0 \quad &\text{otherwise}
    \end{cases}\,.
    \end{equation}
\end{lemma}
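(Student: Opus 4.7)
The plan is to derive each of the seven relations directly from the Cuntz-Krieger relations of Definition~\ref{def:graph}, together with the standard partial isometry identity $s_e = s_e s_e^* s_e$ and the fact that $\{p_v\}_{v \in E^0}$ is a family of mutually orthogonal projections. No serious obstacle is expected; as the text remarks, the lemma is essentially a bookkeeping exercise and one simply has to check each line.

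For the first relation, if $e \neq e'$ then $s_e s_e^*$ and $s_{e'} s_{e'}^*$ are orthogonal projections by the very definition of a Cuntz-Krieger family, so $(s_e^* s_{e'})(s_e^* s_{e'})^* = s_e^*(s_e s_e^*)(s_{e'} s_{e'}^*) s_e = 0$, and hence $s_e^* s_{e'} = 0$. For relations (2), (3), (4), I will use that for any sink $w_m$ and any vertex $v \in \Eboundary \cup \Eint$, the projections $q_m$ and $p_v$ are mutually orthogonal (they correspond to distinct vertices). Combined with the identity $s_e = p_{r(e)} s_e$ (which follows from $s_e s_e^* \leq p_{r(e)}$), this gives $q_m s_e = q_m p_{r(e)} s_e = 0$ for any boundary or interior edge $e$, since $r(e) \notin \Esink$; and symmetrically $s_e q_m = s_e p_{s(e)} q_m = 0$, since a sink emits no edges so $s(e) \neq w_m$. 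The same argument, now applied with $t_{m,n}$ in place of $s_e$ and noting that $r(f_{m,n}) = w_m$, yields (4): $p_v t_{m,n} = p_v q_m t_{m,n} = 0$.

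Relation (7) is handled analogously: since $t_{m,n} t_{m,n}^* \leq q_m$, we have $q_m t_{m,n} = t_{m,n}$, and for $m' \neq m$ the projections $q_m$ and $q_{m'}$ are orthogonal, giving $q_{m'} t_{m,n} = q_{m'} q_m t_{m,n} = 0$. Relation (5) then follows immediately: $t_{m,n} t_{m',n'} = t_{m,n} q_{m'} t_{m',n'}$, and $t_{m,n} q_{m'} = t_{m,n} p_{s(f_{m,n})} q_{m'} = 0$ because $s(f_{m,n})$ is not a sink (a sink emits no edges), so $p_{s(f_{m,n})} q_{m'} = 0$.

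Finally, for (6), the Cuntz-Krieger relation (i) gives $s_{y,n}^* s_{y,n} = p_{s(e_{y,n})} = p_y$, and then $s_{y,k} p_y = s_{y,k} p_{s(e_{y,k})} = s_{y,k} s_{y,k}^* s_{y,k} = s_{y,k}$, which is the claim. The whole proof therefore reduces to a short enumeration; the only mild subtlety is to keep straight which vertex plays the role of source versus range in each identity, and to record explicitly that a sink emits no edges, which is what prevents the various cross terms with $q_m$ or $q_{m'}$ from surviving.
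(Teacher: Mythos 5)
Your proof is correct and follows exactly the route the paper intends: the paper states this lemma as immediate from the Cuntz--Krieger relations, and your argument is just the careful write-out of those routine checks (range/source projection identities, mutual orthogonality of vertex and range projections, and the fact that sinks emit no edges). Nothing further is needed.
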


Choose a strictly decreasing sequence $(\delta_m)_{m=1}^\infty \subseteq (0, 1)$ such that $\sum_{m=1}^\infty \delta_m < \infty$. Let $(\alpha_{y,n})_{y \in Y,n \ge 1} \subseteq (0,1)$ satisfy $\sum_{y \in Y, n \ge 1} \alpha_{y,n} < \infty$ and be such that 
\begin{equation}
    \sum_{r(e_{y,n}) \in V_\ell} \alpha_{y,n} \le \delta_\ell - \delta_{\ell+1}
\end{equation}
for all $\ell$. This implies that
\begin{equation}\label{DecompOfa_less_delta}
   \sum_{\ell \ge m} \sum_{r(e_{y,n}) \in V_\ell} \alpha_{y,n} \le \delta_m \quad\quad \text{for all }m\,.
\end{equation}

Let the double sequence $(\gamma_{m,n})_{m,n=1}^\infty$ be such that $\sum_{m,n=1}^\infty (\gamma_{m,n}-\delta_m) < \infty$, and such that $(\gamma_{m,n})_{n=1}^\infty \subseteq (\delta_m, \delta_{m-1})$ is a strictly decreasing sequence for each $m \in \mathbb{N}$. Set ${\beta_{m,n} \coloneq \frac{\gamma_{m,n}-\delta_m}{2}}$ for each $m,n \in \N$. Note that $\sum_{m,n=1}^\infty \beta_{m,n}< \infty$. Define
\begin{align}
    a &\coloneq \sum_{\substack{y \in Y \\ n \ge 1}} \alpha_{y,n+1} s_{y,n+1} s_{y,n}^*\label{eq:a}\,,\\
    b &\coloneq \sum_{m,n \ge 1} \beta_{m,n} t_{m,n}\label{eq:b}\,, \\
    c &\coloneq \sum_{m \ge 1} \delta_m q_m + \sum_{m,n \ge 1} (\gamma_{m,n}-\delta_m) t_{m,n}t_{m,n}^*\label{eq:c}\,.
\end{align}
Note that all sums converge absolutely in norm, so that $a,b$ and $c$ are well-defined elements of $\Cstar(E)$.
Let $C \subseteq \C$ denote the spectrum of $a^*a + (b+c)^* (b+c)$. By Lemma~\ref{lemma:countable-spectrum}, $C$ is a countable compact set. Let $S$ denote the set of edges $\edgeint \cup \{e_{y,1}\}_{y \in Y}$, and let $(\epsilon_e)_{e \in \edgeint}$ be a sequence of positive real numbers such that $\sum_{e \in \edgeint} \epsilon_e < \infty$. Note that $\Eint$ is equal to the set of sources of edges in $S$. 
Define $A(E)$ to be the subalgebra 
\begin{equation}\label{AbelianSubalgebra}
    A(E) \coloneq \Cstar(p_v,s_e s_e^* \colon v \in \Eint, e \in \edgeint).
\end{equation}
Applying Lemma~\ref{lemma:funny-function} to the set $S$ yields a positive element $\xi \in A(E)$ of norm at most 1 such that
\begin{equation}\label{eq:d}
d \coloneq \left( \sum_{y \in Y} \alpha_{y,1} s_{y,1} + \sum_{e \in \edgeint} \epsilon_e s_e \right)\xi^{\frac{1}{2}}
\end{equation}
satisfies the following:
\begin{enumerate}[label=(\roman*)]
    \item $A(E) = \Cstar(d^*d)$;
    \item for any $e \in S$, there exists $h \in A(E)$ such that $s_e^*s_e = \xi^{\frac{1}{2}} h$;
    \item the spectrum of $d^*d$ is disjoint from $C \setminus \{0\}$.
\end{enumerate}
We now define the candidate generator $g$ for the graph $C^*$-algebra $C^*(E)$ by
\begin{equation}\label{generator}
    g \coloneq a+b+c+d    \,.
\end{equation}

\section{The proof}
\label{sec:proof}
In this section, we show that the element $g$ constructed in Section \ref{sec:g} generates $\Cstar(E)$. The strategy uses induction, and goes broadly as follows. Our goal is to show that each edge partial isometry $s_e$ and sink projection $q_i$ belongs to $\Cstar(g)$. Once we have shown that $s_e$ (resp. $q_i$) belongs to $\Cstar(g)$, we subtract all terms involving $s_e$ (resp. $q_i$) from $g$ to obtain a simplified generator (see \eqref{eq:g11} and \eqref{eq:g_NM}). The coefficients in the generator $g$ are carefully arranged so that all edge partial isometries and sink projections can be extracted via an induction procedure using these simplified generators (see Lemma~\ref{lemma:proposition-4-part1} and Proposition~\ref{prop:proposition-4-part2}).

In Subsection~\ref{subsec:interior} it is shown that all partial isometries associated with interior edges belong to $\Cstar(g)$. Similarly, Subsection~\ref{subsec:boundar} deals with boundary edges, and Subsection~\ref{subsec:sinks} deals with sinks and sink edges.

\subsection{Interior edges}
\label{subsec:interior}

In the following proposition, we mirror the proof of Corollary~\ref{corollary:no-sinks} and show that all partial isometries associated with interior edges belong to $\Cstar(g)$.

\begin{prop}
\label{proposition:interior-edges}
     Let $g \in C^*(E)$ be the candidate generator from \eqref{generator}, and let $A(E)$ be as in \eqref{AbelianSubalgebra}. Then $A(E) \subseteq C^*(g)$. Consequently, all partial isometries associated with interior edges belong to $\Cstar(g)$. In particular, $\left( \sum_{e \in \edgeint} \epsilon_e s_e \right)\xi^{\frac{1}{2}}$ belongs to $\Cstar(g)$.
\end{prop}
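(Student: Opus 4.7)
The plan is to apply Lemma~\ref{lemma:disjoint-spectra} to the self-adjoint elements $d^*d$ and $h^*h$, where $h \coloneq a+b+c$, in order to conclude $d^*d \in C^*(g^*g) \subseteq C^*(g)$. Since $A(E) = C^*(d^*d)$ by Lemma~\ref{lemma:funny-function}(i), this will yield the main claim $A(E) \subseteq C^*(g)$. Two orthogonality facts are needed: $d^*h = 0$ (so that $g^*g = d^*d + h^*h$) and $d^*d \cdot h^*h = 0$. Disjointness of the non-zero spectra is built into the construction of $\xi$, once one identifies $\sigma(h^*h)$ with the set $C$ defined before~\eqref{generator}.

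The first orthogonality, $d^*h = 0$, is bookkeeping with Lemma~\ref{lemma:orthogonality}. Every partial isometry in $d^*$ is an adjoint $s_{y,1}^*$ or $s_e^*$ with $e \in \edgeint$, while every partial isometry in $h$ is either a boundary edge $s_{y,n+1}$ with $n \ge 1$ (in $a$) or a sink edge $t_{m,n}$ (in $b$ and $c$). Since the three edge families $\edgeint \cup \{e_{y,1}\}_{y \in Y}$, $\{e_{y,n+1}\}_{y \in Y,\, n \ge 1}$, and $\edgesink$ are pairwise disjoint, all cross-products $s_\alpha^* s_\beta$ appearing in $d^*a$ and $d^*b$ vanish; the remaining products $s_\alpha^* q_m$ and $s_\alpha^* t_{m,n}t_{m,n}^*$ in $d^*c$ also vanish, since $s_{y,1}$ and $s_e$ (for $e \in \edgeint$) have range in $\Eboundary \cup \Eint$, which is disjoint from $\Esink$. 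The same reasoning gives $a^*b = a^*c = 0$, so $h^*h = a^*a + (b+c)^*(b+c)$ and hence $\sigma(h^*h) = C$.

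For the second orthogonality I would prove the stronger statement that $A(E) \cdot h = 0$, checked on the generating projections. For $v \in \Eint$, each of $p_v a$, $p_v b$, $p_v c$ is zero because every partial-isometry range projection appearing in $a,b,c$ lies on a boundary or sink vertex, not an interior one. For $e \in \edgeint$, one has $r(e) \in \Eint$, so $s_e s_e^* = s_e s_e^* p_{r(e)}$ reduces this case to the previous. The closed ideal $\{x \in A(E): xh=0\}$ of the commutative $C^*$-algebra $A(E)$ therefore contains all the generators and must equal $A(E)$. In particular $d^*d \cdot h = 0$, whence $d^*d \cdot h^*h = 0$. Combined with $\sigma(d^*d) \setminus \{0\}$ being disjoint from $C = \sigma(h^*h)$ by construction, Lemma~\ref{lemma:disjoint-spectra} applied to the orthogonal normal elements $d^*d, h^*h$ gives $d^*d \in C^*(g^*g) \subseteq C^*(g)$, proving $A(E) \subseteq C^*(g)$.

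For the consequences, fix $e \in \edgeint$ and consider $s_e s_e^* g$. The $h$-piece dies since $s_e s_e^* \in A(E)$ and $A(E) \cdot h = 0$, while the $d$-piece simplifies using $s_e^* s_{e'} = \delta_{e,e'} p_{s(e)}$ and $s_e^* s_{y,1} = 0$ (Lemma~\ref{lemma:orthogonality}) to $s_e s_e^* g = \epsilon_e s_e \xi^{1/2}$. Lemma~\ref{lemma:funny-function}(ii) provides $\eta \in A(E)$ with $p_{s(e)} = \xi^{1/2} \eta$, so
\begin{equation*}
s_e \;=\; s_e p_{s(e)} \;=\; \epsilon_e^{-1}\bigl(s_e s_e^* g\bigr)\eta \;\in\; C^*(g)\,.
\end{equation*}
Finally, $\sum_{e \in \edgeint} \epsilon_e s_e$ converges in norm (using $\sum \epsilon_e < \infty$ and $\|s_e\| \le 1$) to an element of $C^*(g)$, and multiplication by $\xi^{1/2} \in A(E) \subseteq C^*(g)$ gives the final assertion. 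The main obstacle is the orthogonality bookkeeping above; once $d^*h = 0$ and $A(E)\cdot h = 0$ are in hand, Lemma~\ref{lemma:disjoint-spectra} together with Lemma~\ref{lemma:funny-function}(ii) do the rest.
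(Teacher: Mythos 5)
You follow the paper's route exactly: expand $g^*g = a^*a + (b+c)^*(b+c) + d^*d$ using the vanishing cross terms, apply Lemma~\ref{lemma:disjoint-spectra} together with property (iii) of $d^*d$ to get $d^*d \in \Cstar(g)$, hence $A(E) \subseteq \Cstar(g)$ by (i), and then recover $\epsilon_e s_e$ from $s_e s_e^* g$ and Lemma~\ref{lemma:funny-function}(ii). The cross-term bookkeeping ($d^*h=0$, $a^*(b+c)=0$) and the final extraction step are correct and essentially identical to the paper's proof.

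The gap is the step ``$d^*d\cdot h = 0$, whence $d^*d \cdot h^*h = 0$'' (with $h = a+b+c$). Left-annihilation $A(E)\,h = 0$ gives, after taking adjoints, only $h^*\,A(E) = 0$; to conclude $(d^*d)(h^*h) = 0$ you would need $(d^*d)\,h^* = 0$, i.e. $h\,A(E) = 0$, and this fails because a sink edge satisfies $t_{m,n}\,p_{s(f_{m,n})} = t_{m,n}$ and the \emph{source} of a sink edge can be an interior vertex. Indeed $h^*h$ contains $b^*b = \sum_{m,n}\beta_{m,n}^2\, p_{s(f_{m,n})}$, so the asserted orthogonality itself can fail: take the graph with one vertex $u$ carrying a loop $e$ and one edge $f$ from $u$ to a sink $w$, so $u \in \Eint$ and $f$ is a sink edge with interior source. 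There $A(E) = \C p_u$, $d^*d$ is a strictly positive multiple of $p_u$, and $h^*h = \beta_{1,1}^2 p_u + \beta_{1,1}\gamma_{1,1}(t_{1,1}+t_{1,1}^*) + \delta_1^2 q_1 + (\gamma_{1,1}^2-\delta_1^2)t_{1,1}t_{1,1}^*$, whose product with $d^*d$ is a nonzero multiple of $\beta_{1,1}^2 p_u + \beta_{1,1}\gamma_{1,1} t_{1,1}^*$. So $d^*d$ and $h^*h$ need not be orthogonal, and Lemma~\ref{lemma:disjoint-spectra} cannot be invoked as you do. In fairness, the paper's own proof asserts the orthogonality of $a^*a+(b+c)^*(b+c)$ and $d^*d$ in a single unproved sentence, so you have faithfully reproduced its argument and located its weakest point; but your left-annihilation justification does not close it, and since the orthogonality genuinely fails whenever an interior vertex emits a sink edge, this step requires a different argument (or a repaired decomposition of $g^*g$) rather than the inference you give.
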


\begin{proof}
    By orthogonality of range projections of edges and Lemma~\ref{lemma:orthogonality}, we have that\footnote{In (\ref{eq:a}), the sum defining $a$ contains terms of the form $\alpha_{y,k}s_{y,k}s^*_{y,k-1}$ for $k \ge 2$, so $a^*d$ indeed vanishes.} ${a^*d = a^*(b+c) = d^*(b+c) = 0}$. Hence
    \begin{equation}
    g^*g = a^*a + (b+c)^*(b+c) + d^*d\,.
    \end{equation}
    The positive elements $a^*a + (b+c)^*(b+c)$ and $d^*d$ are orthogonal, and, away from 0, they have disjoint spectra by (iii) above. Therefore, $d^*d$ belongs to $\Cstar(g)$ by Lemma~\ref{lemma:disjoint-spectra}. It follows that $A(E) \subseteq \Cstar(g)$ by condition (i) above.

    We now show that all partial isometries associated with interior edges belong to $C^*(g)$. Let $e \in \edgeint$. Then $s_e s_e ^* \in A(E) \subseteq \Cstar(g)$, so $s_e s_e^* g \in \Cstar(g)$. Observe that
    \begin{equation}
    s_e s_e^* g = s_e s_e^* d = \epsilon_e s_e \xi^{\frac{1}{2}}
    \end{equation}
    by orthogonality of range projections. Next, by (ii) above, there exists $h \in A(E) \subseteq \Cstar(g)$ such that $\xi^{\frac{1}{2}}h =s_e^*s_e$. So
    \begin{equation}
    s_e s_e^* g h = \epsilon_e s_e \xi^{\frac{1}{2}}h = \epsilon_e s_e s_e^* s_e = \epsilon_e s_e
    \end{equation}
    is an element of $\Cstar(g)$. Since $\epsilon_e >0$, we get that $s_e \in \Cstar(g)$ as desired.
\end{proof}

\subsection{Boundary edges}\label{subsec:boundar}
The previous subsection proves that all partial isometries associated with interior edges belong to $\Cstar(g)$. We subtract all terms involving interior edges from $g$ to obtain a new element $g_{1,1} \in \Cstar(g)$ defined in \eqref{eq:g11}. All terms involving boundary edges will be captured in a new element $a_1$ defined in \eqref{eq:a_1}. In this subsection we show that all partial isometries associated with boundary edges belong to the \Cstar-subalgebra jointly generated by $g$ and $a_1$. In Subsection~\ref{subsec:sinks} it is shown that $a_1 \in \Cstar(g)$, so that all partial isometries associated with boundary edges belong to $\Cstar(g)$, as desired.

\vspace{0.3cm}
Define
\begin{equation}\label{eq:g11}
g_{1,1} \coloneq g - \left( \sum_{e \in \edgeint} \epsilon_e s_e \right)\xi^{\frac{1}{2}} = a_1 + b + c\,.
\end{equation}
where
\begin{equation}\label{eq:a_1}
    a_1 \coloneq \left( \sum_{y \in Y} \alpha_{y,1} s_{y,1} \right)\xi^{\frac{1}{2}} + \sum_{\substack{y \in Y \\ n \ge 1}} \alpha_{y,n+1} s_{y,n+1} s_{y,n}^*\,.
\end{equation}
Note that $g_{1,1} \in C^*(g)$ since $\left( \sum_{e \in \edgeint} \epsilon_e s_e \right)\xi^{\frac{1}{2}} \in \Cstar(g)$ by Proposition~\ref{proposition:interior-edges}.
\begin{lemma}\label{lemma:boundary-edges}
    All partial isometries associated with boundary edges belong to $\Cstar(g, a_1)$.
\end{lemma}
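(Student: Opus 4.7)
The plan is to extract each $s_{y,n}$ (for $y \in Y$ and $n \ge 1$) from $\Cstar(g, a_1)$ by repeatedly right-multiplying $a_1$ by elements already known to lie in $\Cstar(g, a_1)$. By Proposition~\ref{proposition:interior-edges}, the commutative subalgebra $A(E)$ is contained in $\Cstar(g)$; in particular $p_y \in \Cstar(g, a_1)$ for every $y \in Y$, since $Y \subseteq \Eint$. I induct on $n$: the base case extracts $s_{y,1}$ from $a_1 p_y$ using property (ii) of Lemma~\ref{lemma:funny-function}, and the induction step extracts $s_{y, n+1}$ from $a_1 s_{y, n}$.

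The computations rest on three orthogonality observations, all direct consequences of the Cuntz-Krieger relations and the partition $E^0 = \Eint \sqcup \Eboundary \sqcup \Esink$. First, for any $y' \in Y$, $k \ge 1$, and $v \in \Eint$, we have $s_{y',k}^* p_v = 0$, because $s_{y', k} s_{y', k}^*$ sits under the boundary projection $p_{r(e_{y', k})}$ which is orthogonal to $p_v$. Second, for any $y \in Y$ and $k$ with $e_{y, k}$ defined, $\xi^{1/2} s_{y, k} = 0$: all generators of $A(E)$ are dominated by interior vertex projections, so every element of $A(E)$ annihilates $s_{y, k} s_{y, k}^* \le p_{r(e_{y, k})}$ from the left, and then $s_{y, k} = (s_{y, k} s_{y, k}^*) s_{y, k}$. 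Third, orthogonality of the range projections of distinct boundary edges gives $s_{y', k'}^* s_{y, k} = 0$ whenever $(y', k') \neq (y, k)$, while $s_{y, k}^* s_{y, k} = p_y$; moreover $s_{y', 1} p_y = \delta_{y', y} s_{y, 1}$ follows from $s_{y', 1} = s_{y', 1} p_{y'}$ and orthogonality of vertex projections.

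Combining these, together with the commutation of $p_y$ and $\xi^{1/2}$ inside the commutative algebra $A(E)$, a direct computation shows that the ``second block'' $\sum_{y', k} \alpha_{y', k+1} s_{y', k+1} s_{y', k}^*$ of $a_1$ annihilates $p_y$ on the right, while the ``first block'' $\big(\sum_{y'} \alpha_{y', 1} s_{y', 1}\big)\xi^{1/2}$ picks out the $y' = y$ summand, yielding $a_1 p_y = \alpha_{y, 1} s_{y, 1} \xi^{1/2}$. By Lemma~\ref{lemma:funny-function}(ii) applied to $e_{y,1} \in S$, there exists $h \in A(E) \subseteq \Cstar(g)$ with $\xi^{1/2} h = p_y$, so
\begin{equation}
(a_1 p_y)\, h = \alpha_{y, 1} s_{y, 1} \xi^{1/2} h = \alpha_{y, 1} s_{y, 1} p_y = \alpha_{y, 1} s_{y, 1}
\end{equation}
lies in $\Cstar(g, a_1)$ and gives $s_{y, 1} \in \Cstar(g, a_1)$ after dividing by $\alpha_{y, 1} > 0$. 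For the induction step, the same orthogonality observations give
\begin{equation}
a_1 s_{y, k} = \alpha_{y, k+1} s_{y, k+1} p_y = \alpha_{y, k+1} s_{y, k+1}
\end{equation}
for each $k \ge 1$ such that $e_{y, k+1}$ exists, so that $s_{y, k+1} \in \Cstar(g, a_1)$ whenever $s_{y, k}$ is; the induction terminates naturally if $\{e_{y, n}\}_n$ is finite.

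The main bookkeeping obstacle is the cross terms in the two blocks of $a_1$ under right multiplication by $p_y$ and by each $s_{y, k}$: one must carefully check that every unwanted summand vanishes via the orthogonality observations above, and that the argument goes through uniformly even when $Y$ or some $\{e_{y, n}\}_n$ is finite, in which case certain summands are simply absent rather than producing nontrivial cancellations.
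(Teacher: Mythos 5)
Your proof is correct and takes essentially the same route as the paper: extract $s_{y,1}$ via an element $h \in A(E)$ with $\xi^{\frac{1}{2}}h = p_y$ (using Proposition~\ref{proposition:interior-edges} to place $A(E)$ inside $\Cstar(g)$), then climb the chain $s_{y,n} \mapsto s_{y,n+1}$ by left-multiplying by the shift part of $a_1$. The only cosmetic difference is that you compute $a_1 s_{y,n}$ directly, using $\xi^{\frac{1}{2}} s_{y,n} = 0$, whereas the paper first notes that $a = a_1 - \big(\sum_{y} \alpha_{y,1} s_{y,1}\big)\xi^{\frac{1}{2}}$ already lies in $\Cstar(g,a_1)$ and multiplies by $a$; the underlying cancellations are identical.
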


\begin{proof}
    The set of boundary edges is equal to $\{e_{y,n}\}_{y \in Y, n \ge 1}$. Our first claim is that for any $y \in Y$ the partial isometry $s_{y,1}$ belongs to $\Cstar(g, a_1)$. By the construction of $\xi$ and $d$ in \eqref{eq:d}, there exists an element $h \in A(E)$ such that $\xi^{\frac{1}{2}}h= s_{y,1}^* s_{y,1}$. By Lemma~\ref{lemma:orthogonality} we have
    \begin{equation}
    a_1h = \alpha_{y,1} s_{y,1}s_{y,1}^* s_{y,1} = \alpha_{y,1} s_{y,1}\,.
    \end{equation}
    Observe that $a_1h$ belongs to $\Cstar(g, a_1)$ by Proposition~\ref{proposition:interior-edges}, and therefore $s_{y,1}$ also belongs to $\Cstar(g, a_1)$ since $\alpha_{y,1} >0$. This proves the first claim.

    Next, since $\xi \in A(E) \subseteq \Cstar(g)$ (Proposition~\ref{proposition:interior-edges}), we deduce that
    \begin{equation}
    a = \sum_{\substack{y \in Y \\ n \ge 1}} \alpha_{y,n+1} s_{y,n+1} s_{y,n}^* = a_1 - \left( \sum_{y \in Y} \alpha_{y,1} s_{y,1} \right)\xi^{\frac{1}{2}}
    \end{equation}
    belongs to $\Cstar(g, a_1)$. Fix $y \in Y$ and note that ${a s_{y,n} = \alpha_{y,n+1} s_{y,n+1}}$ for any $n \ge 1$ by Lemma~\ref{lemma:orthogonality}. It follows from the first claim that $s_{y,2} \in \Cstar(g, a_1)$ since $\alpha_{y,2} > 0$. Continuing inductively, we deduce that $s_{y,n} \in \Cstar(g, a_1)$ for any $n \ge 1$. Since $y \in Y$ is arbitrary, we have shown that all partial isometries associated with boundary edges belong to $\Cstar(g, a_1)$, finishing the proof.
\end{proof}

The inequality in the following lemma will be crucial in Lemma~\ref{lemma:proposition-4-part1} and the associated induction procedure in Proposition~\ref{prop:proposition-4-part2}.

\begin{lemma}\label{lemma:a_1}
    Let $a_1$ be as in \eqref{eq:a_1}. Then $a_1 = \sum_{v \in \Eboundary} p_v a_1$ as a norm convergent sum, and the inequality
    \begin{equation}\label{eq:decay-of-a_1}
    \sum_{\ell \ge m} \sum_{v \in V_\ell} \norm{p_v a_1} \le \delta_m
    \end{equation}
    holds for any $m$.
\end{lemma}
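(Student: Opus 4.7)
The plan is to exploit the structural observation that every summand in the defining series for $a_1$ is left-absorbed by a unique vertex projection $p_v$ with $v \in \Eboundary$. Concretely, the Cuntz-Krieger relation (ii) gives $p_{r(e_{y,n})} s_{y,n} = s_{y,n}$, and orthogonality of vertex projections kills $p_v s_{y,n}$ for every other $v$. Because $e_{y,n}$ is a boundary edge by construction, $r(e_{y,n}) \in \Eboundary$ for all $y$ and $n$. Hence each of the summands $\alpha_{y,1} s_{y,1} \xi^{1/2}$ and $\alpha_{y,n+1} s_{y,n+1} s_{y,n}^*$ is supported under a single projection $p_v$ with $v \in \Eboundary$.

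For (1), I would observe that the two series defining $a_1$ are absolutely norm convergent, since $\|\xi^{1/2}\| \le 1$, each partial isometry has norm at most $1$, and $\sum_{y \in Y, n \ge 1} \alpha_{y,n} < \infty$ by hypothesis. Absolute convergence permits regrouping of summands by their associated boundary vertex $v$, producing $a_1 = \sum_{v \in \Eboundary} p_v a_1$ as a norm convergent sum, where each $p_v a_1$ is the (absolutely convergent) subseries of those summands whose supporting projection is $p_v$.

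For (2), I would apply the triangle inequality to the same grouping to obtain
\[
\|p_v a_1\| \le \sum_{\substack{y \in Y,\ n \ge 1 \\ r(e_{y,n}) = v}} \alpha_{y,n},
\]
where the $n = 1$ contributions come from the $\xi^{1/2}$-weighted piece of $a_1$ and the $n \ge 2$ contributions come from the second piece after shifting the index $n \mapsto n-1$. Summing over $v \in V_\ell$ replaces the pointwise constraint $r(e_{y,n}) = v$ by $r(e_{y,n}) \in V_\ell$, and then summing over $\ell \ge m$ yields exactly the left-hand side of \eqref{DecompOfa_less_delta}, which supplies the bound $\delta_m$. The argument is essentially bookkeeping; the only care needed is in correctly combining the $n = 1$ and $n \ge 2$ contributions so that the final sum ranges over all $n \ge 1$. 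No genuine obstacle arises once the supporting boundary vertex has been correctly identified for each summand.
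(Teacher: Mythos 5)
Your proposal is correct and follows essentially the same route as the paper: each summand of $a_1$ is absorbed on the left by the unique projection $p_{r(e_{y,n})}$ with $r(e_{y,n}) \in \Eboundary$, the triangle inequality gives $\norm{p_v a_1} \le \sum_{r(e_{y,n})=v} \alpha_{y,n}$, and \eqref{DecompOfa_less_delta} then yields \eqref{eq:decay-of-a_1}. The only cosmetic difference is that you obtain the norm convergence of $\sum_{v \in \Eboundary} p_v a_1$ by regrouping the absolutely convergent series defining $a_1$, whereas the paper deduces it from the case $m=1$ of the inequality; both are fine.
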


\begin{proof}
    For for any $v \in \Eboundary$, we have
    \begin{align*}
    \norm{p_v a_1}
    &\le \sum_{y\in Y} \alpha_{y,1} \norm{p_v s_{y,1}} + \sum_{\substack{y \in Y \\ n \ge 1}} \alpha_{y,n+1} \norm{p_vs_{y,n+1} s^*_{y,n}}\numberthis\\
    &= \sum_{\substack{y \in Y, n \ge 1\\ r(e_{y,n}) = v}} \alpha_{y,n}\,.
    \end{align*}
    Therefore, by \eqref{DecompOfa_less_delta} we have
    \begin{equation}
    \sum_{\ell \ge m} \sum_{v \in V_\ell} \norm{p_v a_1} \le \delta_m
    \end{equation}
    for any $m$ as desired. Taking $m = 1$ gives that $\sum_{v \in \Eboundary} \norm{p_v a_1} \le \delta_1 < \infty$. Hence the sum $\sum_{v \in \Eboundary} p_v a_1$ converges absolutely in norm. Since all edges in $\{e_{y,n}\}_{y \in Y, n \ge 1}$ have range in $\Eboundary$, it is clear from \eqref{eq:a_1} that this sum is equal to $a_1$.
\end{proof}

\subsection{Sinks and sink edges}\label{subsec:sinks}
In this subsection we show that $\Cstar(g)$ contains all sink projections and all partial isometries associated with sink edges. It will then follow that the candidate generator $g$ from \eqref{generator} does indeed generate $\Cstar(E)$.

In Proposition~\ref{prop:proposition-4-part2}, we show by induction (with respect to the lexicographic ordering on $\N \times \N$) that all sink edge partial isometries $t_{M,N}$ belong to $\Cstar(g)$. Lemma~\ref{lemma:proposition-4-part1} provides the inductive step. Simultaneously, we show by induction on $M$ that all sink projections $q_M$ belong to $\Cstar(g)$. Once we have shown that a given partial isometry $t_{M,N}$ belongs to $\Cstar(g)$, we subtract all terms involving $t_{M,N}$ from the generator to obtain a ``modified generator'' $g_{M,N+1}$ (see \eqref{eq:g_NM}). Similarly, once we have shown that a given sink projection $q_M$ belongs to $\Cstar(g)$, we subtract all terms involving $q_{M}$ from the generator to obtain a ``modified generator'' $g_{M+1,1}$. Note that this also includes subtracting all terms involving boundary edges with range in $V_M$, so that $a_M$ is replaced by $a_{M+1}$ (see \eqref{eq:a_M}). 

\vspace{0.3cm}
We equip $\N \times \N$ with the lexicographic ordering. That is, we write $(m_1,n_1) \ge (m_2, n_2)$ if and only if $m_1 > m_2$ or $n_1 \ge n_2$ and $m_1 = m_2$. For $N,M \ge 1$ we define
\begin{equation}\label{eq:g_NM}
g_{M,N} \coloneq  a_M + \sum_{(m,n) \ge (M,N)} \beta_{m,n} t_{m,n} +  \sum_{m \ge M} \delta_m q_m + \sum_{(m,n) \ge (M,N)} (\gamma_{m,n}-\delta_m) t_{m,n}t_{m,n}^*\,,
\end{equation}
where
\begin{equation}\label{eq:a_M}
    a_M \coloneq \sum_{\ell \ge M}\sum_{v \in V_\ell}p_v a_1\,.
\end{equation}
We have used Lemma~\ref{lemma:a_1} to show that the sum in \eqref{eq:a_M} is absolutely convergent. Note that this notation agrees with $g_{1,1}$ and $a_1$ as defined in \eqref{eq:g11} and \eqref{eq:a_1}.

\begin{lemma}
\label{lemma:proposition-4-part1}
    For any $M,N \ge 1$ we have $t_{M,N} \in \Cstar(g_{M,N})$ and $q_{M} \in \Cstar(g_{M+1,1} + \delta_M q_M)$.
\end{lemma}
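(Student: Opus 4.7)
The plan is to prove the two statements separately, leveraging the orthogonality relations in Lemma~\ref{lemma:orthogonality}.

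For the second claim $q_M \in C^*(g_{M+1,1} + \delta_M q_M)$, the key observation is that $q_M$ is two-sidedly orthogonal to $g_{M+1,1}$: the term $a_{M+1}$ involves only boundary vertex projections (orthogonal to $q_M$) and the element $\xi \in A(E)$ (which annihilates $q_M$ since all generators of $A(E)$ are orthogonal to $q_M$), while $b_{M+1,1}$ and $c_{M+1,1}$ involve only sinks $w_m$ and sink edges with $m \ge M+1 \ne M$. Setting $\tilde{g} \coloneq g_{M+1,1} + \delta_M q_M$, the cross terms in $\tilde{g}^*\tilde{g}$ vanish, yielding
\[
\tilde{g}^* \tilde{g} = g_{M+1,1}^* g_{M+1,1} + \delta_M^2 q_M
\]
as a sum of two orthogonal positive elements. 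A further orthogonality $a_{M+1}^*(b_{M+1,1}+c_{M+1,1}) = 0$ (boundary-right support of $a_{M+1}^*$ meets sink-left support of $b_{M+1,1}+c_{M+1,1}$) simplifies $g_{M+1,1}^* g_{M+1,1} = a_{M+1}^* a_{M+1} + (b_{M+1,1}+c_{M+1,1})^*(b_{M+1,1}+c_{M+1,1})$; combined with the estimates $\|a_{M+1}\| \le \delta_{M+1}$ (Lemma~\ref{lemma:a_1}), $\|c_{M+1,1}\| = \gamma_{M+1,1} < \delta_M$, and the smallness of $\|b_{M+1,1}\|$ from summability of the $\beta_{m,n}$, this will give $\|g_{M+1,1}\| < \delta_M$, and hence $\delta_M^2 \notin \sigma(g_{M+1,1}^* g_{M+1,1})$. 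Applying Lemma~\ref{lemma:disjoint-spectra} to the orthogonal normal elements $g_{M+1,1}^* g_{M+1,1}$ and $\delta_M^2 q_M$ inside $\tilde{g}^*\tilde{g}$ then gives $\delta_M^2 q_M \in C^*(\tilde{g}^*\tilde{g}) \subseteq C^*(\tilde{g})$, so $q_M \in C^*(\tilde{g})$.

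For the first claim $t_{M,N} \in C^*(g_{M,N})$, I would establish the identities
\[
g_{M,N} t_{M,N} = \gamma_{M,N} t_{M,N}, \qquad t_{M,N}^* g_{M,N} = \beta_{M,N} p_{s(f_{M,N})} + \gamma_{M,N} t_{M,N}^*,
\]
directly from Lemma~\ref{lemma:orthogonality}: $a_M t_{M,N} = 0$ (boundary left-support vs.\ sink $w_M$), $b_{M,N} t_{M,N} = 0$ (distinct sink edges have orthogonal sources), and $c_{M,N} t_{M,N} = \gamma_{M,N} t_{M,N}$ (using $t_{M,N} = q_M t_{M,N} = (t_{M,N}t_{M,N}^*) t_{M,N}$, picking up $\delta_M + (\gamma_{M,N}-\delta_M)$). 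Setting $P \coloneq t_{M,N} t_{M,N}^*$, the first identity gives $g_{M,N} P = \gamma_{M,N} P$, and multiplying the second on the left by $t_{M,N}$ yields $P g_{M,N} - \gamma_{M,N} P = \beta_{M,N} t_{M,N}$. Hence, once $P \in C^*(g_{M,N})$ is established, $t_{M,N} \in C^*(g_{M,N})$ follows since $\beta_{M,N} > 0$. To extract $P$, I would use the decomposition $g_{M,N}^* g_{M,N} = a_M^* a_M + (b_{M,N}+c_{M,N})^*(b_{M,N}+c_{M,N})$ coming from the analogous $a_M^*(b_{M,N}+c_{M,N}) = 0$, noting that the second summand lies in the subalgebra of Lemma~\ref{lemma:countable-spectrum} and hence has countable spectrum, while the $\gamma_{m,n}$ lie in the disjoint intervals $(\delta_m, \delta_{m-1})$; functional calculus on a suitable self-adjoint element built from $g_{M,N}$ then isolates the spectral projection associated with $\gamma_{M,N}$.

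The hardest step will be the spectral extraction of $P$ in the first claim: although $P$ is a right-eigenprojection for left multiplication by $g_{M,N}$, it is not a usual eigenprojection of $g_{M,N}^*g_{M,N}$ (one computes $g_{M,N}^* g_{M,N} P = \gamma_{M,N}^2 P + \gamma_{M,N}\beta_{M,N} t_{M,N}^*$), so extracting $P$ requires a careful spectral argument exploiting both the countable-spectrum property of the boundary-sink subalgebra and the separation built into the coefficient choices.
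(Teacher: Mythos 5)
Both halves of your proposal contain a genuine gap, and in both cases it is the same missing idea. Take the second claim first: your argument hinges on $\norm{g_{M+1,1}} < \delta_M$, and this does not follow from the construction --- indeed it can fail. Summability of the $\beta_{m,n}$ gives no bound on the tail $\norm{b_{M+1,1}}$ relative to $\delta_M - \gamma_{M+1,1}$ for a fixed $M$; worse, even for a single sink edge $f_{M+1,1}$ (with $a_{M+1}=0$), writing $\gamma = \gamma_{M+1,1}$, $\beta = \beta_{M+1,1}$, a direct computation shows that $g_{M+1,1}^*g_{M+1,1}$ acts on the span of the range and the source of $t_{M+1,1}$ as the block $\bigl(\begin{smallmatrix} \gamma^2 & \gamma\beta \\ \gamma\beta & \beta^2 \end{smallmatrix}\bigr)$, with top eigenvalue $\gamma^2 + \beta^2$. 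Since $\gamma$ may be chosen anywhere in $(\delta_{M+1}, \delta_M)$ and $\beta = \tfrac{1}{2}(\gamma - \delta_{M+1})$, taking $\gamma$ close to $\delta_M$ gives $\gamma^2 + \beta^2 > \delta_M^2$, so $\norm{g_{M+1,1}} > \delta_M$; one can even arrange $\gamma^2+\beta^2 = \delta_M^2$ exactly, in which case $\delta_M^2 \in \sigma(g_{M+1,1}^* g_{M+1,1})$ and Lemma~\ref{lemma:disjoint-spectra} is simply inapplicable. What the coefficient choices \emph{do} guarantee is a spectral-radius bound on the non-normal element $g_{M+1,1}$, and that is what the paper exploits: by the orthogonality you correctly identified, $(g_{M+1,1} + \delta_M q_M)^k = g_{M+1,1}^k + \delta_M^k q_M$, and since $\gamma_{M+1,1}^{-k} g_{M+1,1}^k$ converges (from the proof of the first claim) and $\gamma_{M+1,1} < \delta_M$, one gets $\delta_M^{-k} g_{M+1,1}^k \to 0$, hence $\delta_M^{-k}(g_{M+1,1}+\delta_M q_M)^k \to q_M$. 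Passing to $\tilde g^* \tilde g$ replaces the spectral radius by the norm, which is the wrong invariant here.

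For the first claim, your identities $g_{M,N} t_{M,N} = \gamma_{M,N} t_{M,N}$ and $t_{M,N}^* g_{M,N} = \beta_{M,N} p_{s(f_{M,N})} + \gamma_{M,N} t_{M,N}^*$ are correct, and your final step agrees with the paper's (which uses $t_{M,N}t_{M,N}^* g_{M,N} = \beta_{M,N} t_{M,N} + \gamma_{M,N} t_{M,N}t_{M,N}^*$). But the extraction of $P = t_{M,N}t_{M,N}^*$, which you yourself flag as unresolved, is the entire content of the lemma, and the sketched route through $g_{M,N}^* g_{M,N}$ faces obstructions beyond the one you note: the summands $a_M^* a_M$ and $(b+c)^*(b+c)$, while having vanishing cross terms, are not mutually orthogonal (a sink edge may have an interior or boundary source, where the right support of $a_M$ also lives), so the countable-spectrum subalgebra of Lemma~\ref{lemma:countable-spectrum} does not split off cleanly --- note also that $a_M^* a_M$ involves $\xi$ and may well have uncountable spectrum; and, as the $2\times 2$ block above shows, the spectral projection of $g^*g$ near $\gamma_{M,N}^2$ is a ``tilted'' projection mixing the range of $t_{M,N}$ with its source, not $P$. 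The paper resolves this by iterating $g_{M,N}$ itself rather than $g_{M,N}^* g_{M,N}$: using $\norm{a_M} \le \delta_M < \gamma_{M,N}$ (Lemma~\ref{lemma:a_1}) and the lexicographic decrease of the $\gamma_{m,n}$, one shows $\gamma_{M,N}^{-k} g_{M,N}^k \to y = t_{M,N}t_{M,N}^* + t_{M,N} z$ with $z = \beta_{M,N} \sum_{j \ge 1} \gamma_{M,N}^{-j} a_M^{j-1}$ of norm at most $\tfrac{1}{2}$ (this is where $\beta_{M,N} = \tfrac{1}{2}(\gamma_{M,N}-\delta_M)$ enters), whence $(yy^*)^{1/k} \to t_{M,N}t_{M,N}^*$. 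This one-sided power iteration is the single missing idea in both halves of your proposal.
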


\begin{proof}
    For $k \ge 1$, we have the following by Lemma~\ref{lemma:orthogonality}:
    \begin{align*}
    \frac{1}{\gamma_{M,N}^k}g_{M,N}^k &= \ 
    \frac{1}{\gamma_{M,N}^k}a_M^k 
    + \frac{1}{\gamma_{M,N}^k} \sum_{m \ge M} \delta_m^k q_m + \sum_{(m,n) \ge (M,N)} \frac{\gamma_{m,n}^k-\delta_m^k}{\gamma_{M,N}^k} t_{m,n}t_{m,n}^*\numberthis\\
    &+ \sum_{(m,n) \ge (M,N)} \frac{\beta_{m,n} t_{m,n} }{\gamma_{M,N}^k} \left( \sum_{j=1}^{k} \gamma_{m,n}^{k-j} \ a_M^{j-1} \right).
    \end{align*}
    By assumption we have $\delta_M < \gamma_{M,N}$, and $\norm{a_M} \le \delta_M$ by Lemma~\ref{lemma:a_1}, so the first term converges to $0$ as $k \to \infty$. The second term converges to 0 by the monotone convergence theorem, since $\delta_m \le \delta_M < \gamma_{M,N}$ for all $m \ge M$. For the third and fourth terms, the sums over $(m,n) \ge (M, N+1)$ will converge to 0 as $k \to \infty$. Indeed
    \begin{equation}
    \norm{\sum_{(m,n) \ge (M,N+1)} \frac{\gamma_{m,n}^k-\delta_m^k}{\gamma_{M,N}^k} t_{m,n}t_{m,n}^*}
    \le \max_{(m,n) \ge (M,N+1)} \left( \frac{\gamma_{m,n}^k-\delta_m^k}{\gamma_{M,N}^k}\right)
    \le \frac{\gamma_{M,N+1}^k}{\gamma_{M,N}^k}
    \end{equation}
    since the projections $t_{m,n}t_{m,n}^*$ are mutually orthogonal and the double sequence $(\gamma_{m,n})$ is decreasing with respect to the lexicographic ordering. Note that $\sum_{j=1}^\infty \gamma_{M,N+1}^{-j} \norm{a_M}^{j-1}$ converges since $\delta_M < \gamma_{M,N+1}$. Then for $(m,n) \ge (M,N+1)$ we have
    \begin{equation}
    \norm{\frac{\beta_{m,n} t_{m,n}}{\gamma_{M,N}^k} \sum_{j=1}^{k} \gamma_{m,n}^{k-j} \ a_M^{j-1}} \le \frac{\beta_{m,n} \gamma_{M,N+1}^k}{\gamma_{M,N}^k} \sum_{j=1}^\infty \gamma_{M,N+1}^{-j} \norm{a_M}^{j-1} \lesssim \beta_{m,n} \left(\frac{\gamma_{M,N+1}}{\gamma_{M,N}}\right)^k,
    \end{equation}
    and hence the sum of these terms converges to 0 as $k \to \infty$. 
    Overall, we get
    \begin{equation}\label{eq:converge-to-y}
    \frac{1}{\gamma_{M,N}^k}g_{M,N}^k \to t_{M,N}t_{M,N}^* + \beta_{M,N} t_{M,N} \sum_{j=1}^\infty \gamma_{M,N}^{-j} a_M^{j-1} \eqqcolon y
    \end{equation}
    as $k \to \infty$, so $y \in C^*(g_{M,N})$. Write
    \begin{equation}
    z \coloneq \beta_{M,N} \sum_{j=1}^\infty \gamma_{M,N}^{-j} a_M^{j-1}\,,
    \end{equation}
    so $y = t_{M,N}t_{M,N}^* + t_{M,N} z$. Then
    $
    yy^* = t_{M,N}t_{M,N}^* + t_{M,N} z z^* t_{M,N}^*\,.
    $
    Since $\norm{a_M} \le \delta_M$ and $\beta_{M,N} = \frac{1}{2} (\gamma_{M,N} - \delta_M)$, we get that
    \begin{equation}
    \norm{z} \leq \frac{\beta_{M,N}}{\gamma_{M,N} - \norm{a_M}} \leq \frac{1}{2}\,.
    \end{equation}
    Hence $yy^*$ is a small perturbation of the projection $t_{M,N}t_{M,N}^*$, so that
    \begin{equation}
    \lim_{k\to\infty} (yy^*)^{1/k} = t_{M,N}t_{M,N}^*\,.
    \end{equation}
    Thus $t_{M,N}t_{M,N}^* \in C^*(g_{M,N})$. Finally, note that
    \begin{equation}
    t_{M,N}t_{M,N}^* g_{M,N} = \beta_{M,N} t_{M,N} + \gamma_{M,N} t_{M,N}t_{M,N}^*\,
    \end{equation}
    so $t_{M,N} \in C^*(g_{M,N})$, as claimed.

    Next, consider the limit of $\frac{1}{\delta_M^k}\big(g_{M+1,1} + \delta_M q_M\big)^k$ as $k \to \infty$. The elements $g_{M+1,1}$ and $q_M$ are orthogonal by Lemma~\ref{lemma:orthogonality}, hence $\big(g_{M+1,1} + \delta_M q_M\big)^k = g_{M+1,1}^k + \delta_M^k q_M$. By \eqref{eq:converge-to-y} the limit $\lim_{k \to \infty}\frac{1}{\gamma_{M+1,1}^k}g_{M+1,1}^k$ exists, hence $\lim_{k \to \infty}\frac{1}{\delta_{M}^k}g_{M+1,1}^k = 0$ since $\gamma_{M+1,1} < \delta_M$. It follows that $\lim_{k \to \infty} \frac{1}{\delta_M^k}\big(g_{M+1,1} + \delta_M q_M\big)^k = q_M$. So indeed $q_m \in \Cstar(g_{M+1,1} + \delta_M q_M)$, as claimed.
\end{proof}

\begin{prop}
\label{prop:proposition-4-part2}
    Let $g_{1,1}$ be as in \eqref{eq:g11}. All projections and partial isometries associated with sinks and sink edges belong to $\Cstar(g_{1,1})$, and hence to $\Cstar(g)$. 
    Also, $a_1$ belongs to $C^*(g_{1,1})$ and hence to $\Cstar(g)$, where $a_1$ is as in \eqref{eq:a_1}.
\end{prop}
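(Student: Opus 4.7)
The plan is a double induction on $(M,N) \in \N \times \N$ with respect to the lexicographic order, showing simultaneously that $g_{M,N}$, $t_{M,N}$, and $q_M$ all lie in $\Cstar(g_{1,1})$. The base case $g_{1,1} \in \Cstar(g_{1,1})$ is trivial. For the horizontal step (fixed $M$, incrementing $N$), Lemma~\ref{lemma:proposition-4-part1} delivers $t_{M,N} \in \Cstar(g_{M,N}) \subseteq \Cstar(g_{1,1})$; then $g_{M,N+1}$ is obtained from $g_{M,N}$ by subtracting scalar multiples of $t_{M,N}$ and $t_{M,N}t_{M,N}^*$, both now in $\Cstar(g_{1,1})$.

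The main difficulty is the vertical step $M \to M+1$. First I would argue that $g_{M,N}$ converges in norm as $N \to \infty$ to the element
\begin{equation*}
g_{M,\infty} \coloneq a_M + \sum_{m > M,\, n \ge 1} \beta_{m,n} t_{m,n} + \sum_{m \ge M} \delta_m q_m + \sum_{m > M,\, n \ge 1} (\gamma_{m,n} - \delta_m) t_{m,n}t_{m,n}^*,
\end{equation*}
using $\sum_n \beta_{M,n} < \infty$ together with the fact that the mutually orthogonal projections $t_{M,n}t_{M,n}^*$ carry coefficients $\gamma_{M,n} - \delta_M$ that decrease to $0$. Comparing with \eqref{eq:g_NM} then yields
\begin{equation*}
g_{M,\infty} - g_{M+1,1} = \sum_{v \in V_M} p_v a_1 + \delta_M q_M.
\end{equation*}

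The crucial claim, and the main obstacle, is that $\sum_{v \in V_M} p_v a_1 \in \Cstar(g_{1,1})$. For each $v \in V_M$ the definition of $V_M$ supplies an edge $f_{M,n_v}$ with source $v$, so $p_v = t_{M,n_v}^* t_{M,n_v}$ already lies in $\Cstar(g_{1,1})$ by the horizontal induction for row $M$. Lemma~\ref{lemma:orthogonality} then gives $p_v g_{M,1} = p_v a_M = p_v a_1$, since $v \in \Eboundary$ annihilates every sink projection, sink-edge partial isometry, and sink-edge range projection in $g_{M,1}$, and the $V_\ell$ are pairwise disjoint. Summing over $v \in V_M$, with norm-convergence guaranteed by Lemma~\ref{lemma:a_1}, settles the claim. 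Consequently $g_{M+1,1} + \delta_M q_M \in \Cstar(g_{1,1})$, Lemma~\ref{lemma:proposition-4-part1} produces $q_M \in \Cstar(g_{1,1})$, and subtracting $\delta_M q_M$ completes the vertical step. Finally, $a_1 = \sum_{M \ge 1} \sum_{v \in V_M} p_v a_1$ is a norm-convergent (by Lemma~\ref{lemma:a_1}) sum of elements of $\Cstar(g_{1,1})$, which handles the remaining assertion.
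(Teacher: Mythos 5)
Your proposal is correct and follows essentially the same strategy as the paper: a double induction driven by Lemma~\ref{lemma:proposition-4-part1}, recovering $p_v$ for $v \in V_M$ from $t_{M,n_v}^*t_{M,n_v}$, using the orthogonality relations to identify $p_v a_1$ inside the generator, and Lemma~\ref{lemma:a_1} for the norm convergence of $\sum_{v} p_v a_1$. The only (cosmetic) difference is that you pass to the norm limit $g_{M,\infty}$ of the modified generators, whereas the paper subtracts the convergent row-$M$ series directly from $g_{M,1}$ to reach $g_{M+1,1}+\delta_M q_M$; these amount to the same computation.
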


\begin{proof}    
    Fix $m \ge 1$. Let us show, by induction on $n$, that $t_{m,n} \in \Cstar(g_{m,1})$ for all $n \ge 1$. By Lemma~\ref{lemma:proposition-4-part1}, the partial isometry $t_{m,1}$ belongs to $\Cstar(g_{m,1})$. Assume that ${t_{m,1}, \dots, t_{m,N} \in \Cstar(g_{m,1})}$ for some $N \ge 1$. Then
    \begin{equation}
    g_{m,N+1} = g_{m,1} - \sum_{n=1}^N \beta_{m,n} t_{m,n} - \sum_{n=1}^N (\gamma_{m,n} - \delta_{m}) t_{m,n} t_{m,n}^*
    \end{equation}
    also belongs to $\Cstar(g_{m,1})$. By Lemma~\ref{lemma:proposition-4-part1}, the partial isometry $t_{m,N+1}$ belongs to $\Cstar(g_{m,N+1})$ and hence to $\Cstar(g_{m,1})$. Therefore, by induction on $n$ we have that $t_{m,n} \in \Cstar(g_{m,1})$ for all $n \ge 1$, as claimed.

    Next let us show, by induction on $m$, that $q_m$ and $g_{m,1}$ belong to $\Cstar(g_{1,1})$ for any $m \ge 1$. Assume that $g_{m,1} \in \Cstar(g_{1,1})$ for some $m \ge 1$. By the claim above, the partial isometry $t_f$ belongs to $C^*(g_{1,1})$ for any edge $f$ terminating at the sink $w_m$. By considering $t_f^* t_f$ for such edges, it follows that $p_v$ belongs to $C^*(g_{1,1})$ for any vertex $v \in V_m$ (since for any vertex $v \in V_m$ there exists an edge whose source is equal to $v$ and whose range is equal to $w_m$).
    By Lemma~\ref{lemma:orthogonality} we have $p_v g_{1,1} = p_v a_1$ for any $v \in \Eboundary$ and therefore $\sum_{v \in V_m} p_v a_1 \in C^*(g_{1,1})$, where we have used Lemma~\ref{lemma:a_1} to show that this sum converges. By the first claim, it then follows that
    \begin{equation}
    g_{m+1,1} + \delta_m q_m = g_{m,1} - \sum_{n\ge 1} \beta_{m,n} t_{m,n} - \sum_{n \ge 1} (\gamma_{m,n} - \delta_{m}) t_{m,n} t_{m,n}^* -\sum_{v \in V_m} p_v a_1
    \end{equation}
    also belongs to $\Cstar(g_{1,1})$. By Lemma~\ref{lemma:proposition-4-part1}, the projection $q_m$ belongs to $\Cstar(g_{m+1,1} + \delta_m q_m)$ and hence to $\Cstar(g_{1,1})$. Subtracting $\delta_m q_m$ from the expression above shows that $g_{m+1,1}$ belongs to $g_{1,1}$. Therefore, by induction on $m$ the elements $q_m$ and $g_{m,1}$ belong to $C^*(g_{1,1})$ for any $m \ge 1$. By the first claim, it follows that $t_{m,n} \in C^*(g_{1,1})$ for all $m,n \ge 1$.
    
    We have shown that for any sink edge $f$, the partial isometry $t_f$ lies in $C^*(g_{1,1})$. Therefore, by considering $t_f^* t_f$ for such edges, it follows that $p_v$ belongs to $C^*(g_{1,1})$ for any boundary vertex $v$. By Lemma~\ref{lemma:orthogonality} we have $p_v g_{1,1} = p_v a_1 \in C^*(g_{1,1})$ for any $v \in \Eboundary$, and therefore $a_1$ belongs to $C^*(g_{1,1})$ since $a_1 = \sum_{v \in \Eboundary} p_v a_1$ (Lemma~\ref{lemma:a_1}). This completes the proof of the proposition.
\end{proof}

\begin{thm}
    The element $g$ from \eqref{generator} generates $\Cstar(E)$ as a \Cstar-algebra.
    Consequently, every graph \Cstar-algebra is singly generated.
\end{thm}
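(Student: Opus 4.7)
The plan is to synthesise the three main results of Subsections~\ref{subsec:interior}, \ref{subsec:boundar}, and \ref{subsec:sinks}, each of which captures a different portion of the graph data in $\Cstar(g)$. Once all edge partial isometries and all sink projections are known to lie in $\Cstar(g)$, the non-sink vertex projections come for free from the Cuntz-Krieger relations.

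First I would apply Proposition~\ref{proposition:interior-edges} to place every interior edge partial isometry $s_e$ ($e \in \edgeint$) inside $\Cstar(g)$. Next I would invoke Proposition~\ref{prop:proposition-4-part2}, which simultaneously delivers every sink projection $q_m$, every sink edge partial isometry $t_{m,n}$, and---crucially---the auxiliary element $a_1$ of \eqref{eq:a_1}. With $a_1 \in \Cstar(g)$ secured, Lemma~\ref{lemma:boundary-edges} then promotes every boundary edge partial isometry $s_{y,n}$ from $\Cstar(g, a_1)$ to $\Cstar(g)$.

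At this stage $\Cstar(g)$ contains every partial isometry $s_e$ for $e \in E^1 = \edgeint \sqcup \edgebdy \sqcup \edgesink$ and every sink projection. To recover a vertex projection $p_v$, note that $v$ is either a sink---in which case $p_v$ is already captured above---or emits some edge $e$, in which case the Cuntz-Krieger relation $p_v = s_e^* s_e$ forces $p_v \in \Cstar(g)$. Thus $\Cstar(g)$ contains the full Cuntz-Krieger $E$-family $\{p_v, s_e\}$, so $\Cstar(g) = \Cstar(E)$.

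For the ``consequently'' clause, I would emphasise that the construction of $g$ in Section~\ref{sec:g} and every step in Sections~\ref{subsec:interior}--\ref{subsec:sinks} used only the Cuntz-Krieger relations, together with Lemmas~\ref{lemma:disjoint-spectra} and \ref{lemma:countable-spectrum} applied inside the ambient $\Cstar$-algebra; universality of $\Cstar(E)$ was never invoked. The same argument therefore shows that whenever $\{p_v, s_e\}$ is a Cuntz-Krieger $E$-family in any $\Cstar$-algebra $A$, the corresponding element $g \in A$ generates the $\Cstar$-subalgebra that $\{p_v, s_e\}$ generates. There is no genuine obstacle in this final step: all the real work has already been done, and the theorem is essentially a matter of chaining together the three subsections in the correct order while checking that the Cuntz-Krieger relations suffice to recover the remaining vertex projections.
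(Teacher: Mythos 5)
Your proposal is correct and follows essentially the same route as the paper: chain Proposition~\ref{proposition:interior-edges}, Proposition~\ref{prop:proposition-4-part2} (which supplies the sink data and $a_1$), and Lemma~\ref{lemma:boundary-edges}, then recover the remaining vertex projections via $p_{s(e)} = s_e^*s_e$ (the paper phrases this last step as the standard fact that $\Cstar(E)$ is generated by its edge partial isometries and sink projections). Your closing observation about universality never being used matches the paper's own remark in Section~\ref{sec:prelims}, so the ``consequently'' clause is handled in the same way.
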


\begin{proof}
    By Proposition~\ref{proposition:interior-edges}, all partial isometries associated with interior edges belong to $\Cstar(g)$. By Proposition~\ref{prop:proposition-4-part2}, all projections and partial isometries associated with sinks and sink edges belong to $\Cstar(g)$. Also, the element $a_1$ defined in \eqref{eq:a_1} belongs to $\Cstar(g)$ by Proposition~\ref{prop:proposition-4-part2}, and therefore all partial isometries associated with boundary edges belong to $\Cstar(g)$ by Lemma~\ref{lemma:boundary-edges}.
    To conclude, recall that every graph \Cstar-algebra is generated by its edge partial isometries and sink projections.
\end{proof}

\begin{remark}
    As demonstrated in Corollary~\ref{corollary:no-sinks}, the proof simplifies considerably when the graph $E$ has no sinks. In this case, all edges are interior edges, and the generator $g$ (from \eqref{generator}) is equal to $d$ (from \eqref{eq:d}).
    
    If the graph $E$ has no sinks receiving infinitely many edges, then the proof also simplifies. Indeed, Subsection~\ref{subsec:sinks} becomes redundant, and an element of the form $\tilde{a}+\tilde{d}+\sum_{n} \frac{1}{n^2}q_n$ generates $\Cstar(E)$, where $\{q_1, q_2, \dots\}$ is the set of projections corresponding to vertices that don't emit or receive any edges, and where $\tilde{a}$ and $\tilde{d}$ are as defined in \eqref{eq:a} and \eqref{eq:d}, but with the modified definitions of boundary and interior edges described below. In this setting, one deletes the sets $\Esink$ and $\edgesink$, defines an edge $e \in E^1$ to be a \emph{boundary edge} if it terminates in a sink, and defines an edge to be an \emph{interior edge} otherwise.
\end{remark}

\bibliographystyle{abbrv}
\bibliography{citations}

\end{document}